\newtheorem{Lemma}{Lemma}
\newtheorem{Theorem}[Lemma]{Theorem}
\newtheorem{Corollary}[Lemma]{Corollary}
\newtheorem{Definition}{Definition}
\newtheorem{Problem}{Optimization Problem}
\newenvironment{proofof}[1]{\noindent\emph{Proof of #1.}}{$\Box$}
\newenvironment{quiet-proof}{}{$\Box$}
\newcommand{\Indices}{\mathbb J}
\newcommand{\Natural}{\mathbb N}
\newcommand{\Real}{\mathbb R}
\newcommand{\Domain}{\mathcal D}
\newcommand{\RKHS}{\mathcal H}
\newcommand{\Hilbert}{\mathbb H}
\newcommand{\Sphere}{\mathbb S}
\newcommand{\Kernel}{\mathcal K}
\newcommand{\kernel}{K}
\newcommand{\One}{\mathbf 1}
\newcommand{\DA}{\operatorname{DA}}
\newcommand{\QMC}{\operatorname{QMC}}
\newcommand{\WW}{\operatorname{WW}}
\newcommand{\abs}[1]{\left| #1 \right|}
\newcommand{\norm}[1]{\left\Vert #1 \right\Vert}
\newcommand{\OrderH}{\operatorname{O}}
\newcommand{\To}{\rightarrow}
\newcommand{\argmin}{\operatorname{argmin}}
\newcommand{\argmax}{\operatorname{argmax}}
\newcommand{\linspan}{\operatorname{span}}
\newcommand{\Down}{\operatorname{\downarrow}}
\newcommand{\Eff}{r}
\newcommand{\relmiddle}[1]{\mathrel{}\middle#1\mathrel{}}
\begin{document}

\title{Sparse grid quadrature on products of spheres}
\journalname{Numerical Algorithms}

\author{Markus~Hegland \and Paul~Leopardi}
\institute{Mathematical Sciences Institute, Australian National University. \\ \email{paul.leopardi@gmail.com}}

\date{original: 29 January 2012, revised: 22 January 2015}

\maketitle

\begin{abstract}
\noindent
We examine sparse grid quadrature on weighted tensor products (\textsc{wtp}) of
reproducing kernel Hilbert spaces on products of the unit sphere $\Sphere^2$,
in the case of worst case quadrature error for rules with arbitrary quadrature weights.
We describe a dimension adaptive quadrature algorithm based on an algorithm of Hegland~\cite{Heg03}, 
and also formulate an adaptation of Wasil\-kowski and Wo\'znia\-kowski's \textsc{wtp} algorithm~\cite{WasW99}, 
here called the \textsc{ww} algorithm.
We prove that the dimension adaptive algorithm is optimal in the sense of Dantzig~\cite{Dan57}
and therefore no greater in cost than the \textsc{ww} algorithm.
Both algorithms therefore have the optimal asymptotic rate of convergence of quadrature error
given by Theorem~3 of Wasil\-kowski and Wo\'znia\-kowski~\cite{WasW99}.
A numerical example shows that, even though the asymptotic convergence rate is optimal,
if the dimension weights decay slowly enough, and the dimensionality of the problem is large enough,
the initial convergence of the dimension adaptive algorithm can be slow.
\keywords{
reproducing kernel Hilbert spaces, quadrature, tractability,
sparse grids,  knapsack problems, spherical designs
}
\end{abstract}

\section{Introduction}
This paper examines sparse grid quadrature on weighted tensor products of
reproducing kernel Hilbert spaces (\textsc{rkhs}) of real valued functions on the unit sphere $\Sphere^2 \subset \Real^3,$
in the case of worst case quadrature error for rules with arbitrary quadrature weights.
As per our previous paper on sparse grid quadrature on the torus \cite{HegL11},
the rates of convergence of the quadrature rules constructed here
are examined using the theory of Wasil\-kowski and Wo\'znia\-kowski~\cite{WasW99}.

The setting is the same as that used by Kuo and Sloan~\cite{KuoS05}, and Hesse, Kuo and Sloan~\cite{HesKS07}
to examine quasi-Monte Carlo (\textsc{qmc}) quadrature on products of the sphere $\Sphere^2,$
except that here we examine quadrature with arbitrary weights.

Quadrature on products of spheres is interesting, not just for purely theoretical reasons,
but also for practical reasons.
Integration over products of the unit sphere is equivalent to multiple integration over the unit sphere.
Such multiple integrals can be approximated in a number of ways, including Monte Carlo methods.
Applications of tensor product spaces on spheres and approximate integration over products of spheres
include quantum mechanics \cite{ZakHG03},
and transport and multiple scattering problems in various topic areas,
including acoustics \cite{Sat88}, optical scattering problems \cite{Alt88,KapLD01,StanO95}, 
and neutron transport problems \cite{Vin54}.
One prototypical problem to be solved is scattering by a sequence of spheres.
This can be modelled using a multiple integral of a function on the product of the spheres.
The decay in the weights of successive spheres could model the decreasing influence of scattering
on each successive sphere, 
as opposed to just cutting off the calculation after an arbitrary number of scatterings.

Quadrature with arbitrary weights, as opposed to equal weight quadrature, is interesting because, 
given the same set of quadrature points,
optimal quadrature weights give a quadrature error at least as good as that of equal weight quadrature.
This can result in the quadrature error converging more quickly to zero.
This is illustrated by a numerical example presented at the conference on Monte Carlo and Quasi-Monte Carlo methods in
Warsaw in 2010 \cite{HegL10}.
%

\begin{figure}[!ht]
\centering
\begin{tikzpicture}
\begin{loglogaxis}[
height=80mm,
width=80mm,
legend pos=outer north east,
xmin=1.0,
xmax=1.0e4,
ymin=1.0e-5,
ymax=1.0e-1,
xlabel={Cost (number of quadrature points, $n$)},
ylabel={Worst case quadrature error, $e$}
]
\pgfplotstableread{errhkswt-3-08-0.1.1-simplified.dat}\errhkswttable
\addplot[line width=1pt, color=green!70!black, mark=none] table[x=Points,y=HKS] {\errhkswttable};
\addplot[line width=1pt, color=red,   dashed,  mark=none] table[x=Points,y={HKS-optimal}] {\errhkswttable};
\addplot[line width=1pt, color=black, dotted,  mark=none] table[x=Points,y={Monte-Carlo}] {\errhkswttable};
\legend{HKS error,HKS optimal error,Monte Carlo rate}
\end{loglogaxis}
\end{tikzpicture}
\caption{Error of \textsc{hks} rule vs \textsc{hks} optimal rule for~$(\Sphere^2)^8$, $r=3$\,, $\gamma_{8,k}=0.1^k$.}
\label{HL-figure-0-1-HL-vs-HKS}
\end{figure}
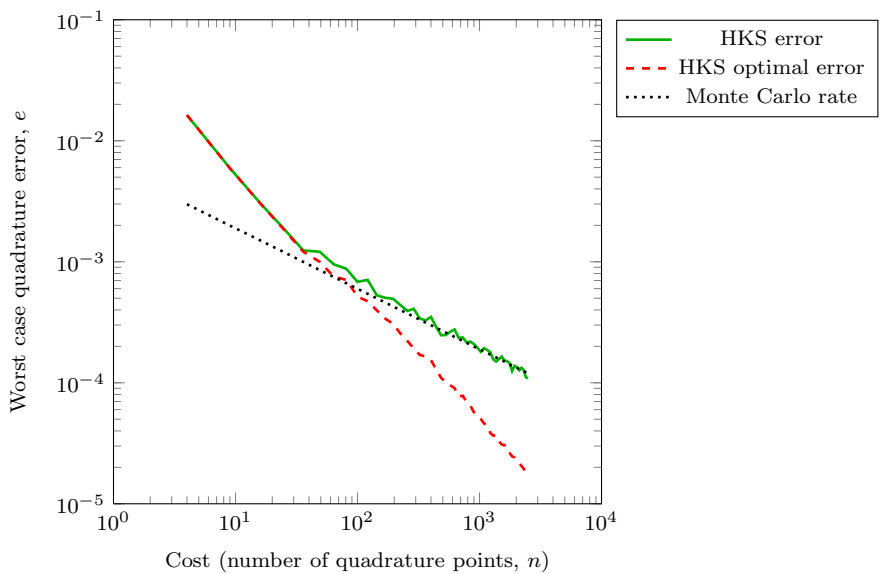

Figure~\ref{HL-figure-0-1-HL-vs-HKS}, based on
slide 25 of the presentation,
compares the performance of two quadrature rules, 
both based on the rule of Hesse, Kuo and Sloan \cite{HesKS07},
for a specific case of quadrature on $(\Sphere^2)^8$.
(The parameters $r$ and $\gamma$ mentioned in the caption are described in Section 2 below.)
The two rules use the same sequence of point sets, differing only in the quadrature weights.
The first rule, whose curve is labelled ``HKS error'', uses \textsc{qmc} weights.
Its error initially converges to zero rapidly, then begins to converge at the Monte Carlo error rate.
The second rule, labelled ``HKS optimal error'', uses optimal weights.
Its error continues to converge to zero rapidly.

As noted in our previous paper \cite{HegL11},
rates of convergence and criteria for strong tractability of quadrature with arbitrary weights
are known in the case of weighted Korobov spaces on the unit torus \cite{HicW01,SloW01}.
As far as we know, this paper is the first to examine the analogous questions 
for quadrature with arbitrary weights on the corresponding spaces on products of spheres.

The paper by Kuo and Sloan~\cite{KuoS05} gives the criteria for tractability and strong tractability
for \textsc{qmc} quadrature on products of spheres in the worst case setting.
Since, for a given finite set of points, quadrature with optimal weights yields an error
no larger than that of \textsc{qmc} quadrature, the quadrature problem for 
arbitrary weights in the worst case is tractable whenever the corresponding \textsc{qmc} quadrature 
problem is tractable.
The relevant definitions and criteria for tractability and strong tractability for
our setting are discussed in greater detail in Section 2.

The algorithms we examine are an adaptation of the algorithm used in
our previous paper \cite{Heg03,HegL11}, and an adaptation of the \textsc{wtp} algorithm of 
Wasil\-kowski and Wo\'znia\-kowski~\cite{WasW99}.
We examine these algorithms theoretically, 
giving bounds for the asymptotic convergence rate of quadrature error in the worst case.
We also examine the performance of these algorithms in practice, via a small number of numerical examples.

The main results of this paper are:
\begin{enumerate}
\item
(Theorem~\ref{HL-theorem-alg-dimadapt})

Under the conditions described in the theorem,
our dimension adaptive (\textsc{da})
algorithm is optimal in the sense of Dantzig \cite[Figure 3, p. 274]{Dan57} 
and therefore no greater in cost than the adapted
Wasil\-kowski and Wo\'znia\-kowski (\textsc{ww}) algorithm.
\item
(Theorem~\ref{HL-theorem-new-3}, Corollary~\ref{HL-cor-1} and Theorem~\ref{HL-theorem-new-4}).

The cost of the adapted \textsc{ww} algorithm in our setting is bounded such that,
whenever the problem is strongly tractable,
the asymptotic rate of convergence of the worst case quadrature error is 
essentially the same as that for the one dimensional problem.
\item
(Numerical examples).

Our numerical examples use exponentially decreasing sequences of dimension weights,
and a finite sequence of spherical designs as listed in Table~\ref{HL-table-DA-designs}.
This sequence of spherical designs yields a sequence of quadrature points 
on a single sphere having all of the properties needed by Theorem~\ref{HL-theorem-new-3}.

Since the problem is strongly tractable in this case,
if the sequence of spherical designs could be extended indefinitely, 
then the adapted \textsc{ww} algorithm would have an
asymptotic rate of convergence of the worst case quadrature error 
essentially the same as that for the one dimensional problem.
As a result of Theorem~\ref{HL-theorem-da-bounds}
the \textsc{da} algorithm would also have this same asymptotic convergence rate
in this case.

\item
(Figure~\ref{HL-figure-0-9-1-2-4-8-16}).

If the dimension weights decay slowly enough (e.g. $\gamma_{d,k} = 0.9^d$),
and the dimensionality is high enough (e.g. $d \geqslant 16$) 
the initial rate of convergence of the \textsc{da} algorithm can be slow.
Specifically, for  $\gamma_{d,k} = 0.9^d$ and $d = 16,$
the \textsc{da} algorithm needs more than $100\,000$ function evaluations to reduce 
the worst case quadrature error from 1 to 0.1.
\end{enumerate}

The remainder of this paper is organized as follows.
Section 2 describes the setting in detail,
and includes a discussion on the tractability of the problem.
Section 3 describes the optimization problem involved in dimension adaptive sparse grid quadrature.
Section 4 introduces the \textsc{da} algorithm and shows that it is optimal in the sense of Dantzig \cite{Dan57}.
Section 5 analyses a version of the \textsc{wtp} algorithm of Wasil\-kowski and Wo\'znia\-kowski,
derives bounds for the asymptotic rate of convergence of its worst case quadrature error to zero,
and applies these bounds to the \textsc{da} algorithm.
Section 6 contains numerical results, comparing implementations of the two algorithms, 
and showing how the \textsc{da} algorithm performs as the dimension is increased.
Appendix A contains a proof of Theorem~\ref{HL-theorem-new-3}.
\section{Setting}
\label{HL-section-Setting}

The setting used here is a special case of a general setting that
also applies to our previous paper \cite{HegL11}.

Let $\Domain \subset \Real^{s+1}$ be a compact manifold with probability measure $\mu.$ 
It follows that the constant function $\One,$ with $\One(x)=1$
for all $x\in \Domain,$ is integrable and $\int_\Domain \One(x)\, d\mu(x)=1.$ 
Then let $H$ be a Hilbert space of functions $f: \Domain \rightarrow \Real,$
with inner product $\langle \cdot,\cdot \rangle_H$, 
and kernel $\kernel,$ with the following properties.
\begin{enumerate}
  \item For every $x\in \Domain,$ the function $k_x\in H$, given by $k_x(y) := \kernel(x,y),$ satisfies
  \begin{equation}
  \label{HL-eq-repro}
  f(x) = \langle k_x, f \rangle_H, \quad \text{for all $f\in H$};
  \end{equation}
  \item Every $f\in H$ is integrable, and the constant function $\One$ is in $H$, such that
  \begin{equation}
  \label{HL-eq-integral}
  \int_\Domain f(x)\, d\mu(x) = \langle \One, f \rangle_H.
  \end{equation}
\end{enumerate}
We recognize $H$ as a reproducing kernel Hilbert space (\textsc{rkhs}).
In this framework, a quadrature rule $Q,$ defined by
\begin{align}
Q(f) &:= \sum_{i=1}^n w_i f(x_i)
\label{HL-eq-rule}
\end{align}
is a continuous linear functional and $Q(f) = \langle q, f\rangle_H$ with representer
\begin{align}
q &= \sum_{i=1}^n w_i k_{x_i},
\label{HL-eq-representer}
\end{align}
such that $Q(f) = \langle q, f\rangle_H.$
Here we have used~\eqref{HL-eq-repro} and the Riesz representation theorem for $H$.
In the remainder of this paper, we also refer to a function $q \in H$ of the form~\eqref{HL-eq-representer}
as a quadrature rule, with the understanding that $q$ represents the linear functional $Q$
of the form~\eqref{HL-eq-rule}.

Given the quadrature points $x_i,$ 
an optimal choice of weights $w_i$ minimizes the worst case quadrature error $e(q)$, 
which is 
\begin{align}
e(q) 
&:= \sup_{f \in H, \norm{f} \leqslant 1} \abs{ \langle \One, f \rangle_H - \langle q, f\rangle_H}
= \norm{\One - q}_H.
\label{HL-eq-e-def}
\end{align}
The optimal $q^*$ is thus defined as
\begin{align*}
q^* &:= \argmin_{q} \left\{\norm{\One - q}_H  \mid q \in \linspan\{ k_{x_1},\ldots,k_{x_n}\} \right\}.
\end{align*}
The weights of an optimal quadrature rule are thus obtained by solving a 
linear system of equations with a matrix whose elements are the values of the
reproducing kernel $\kernel(x_i,x_j)=\langle k_{x_i}, k_{x_j}\rangle_H.$ 
The right-hand side of these equations is a vector with elements all equal to one.

We now describe an auxiliary reproducing kernel Hilbert space $\RKHS$ of functions on $\Domain.$
The space $\RKHS$ has a kernel $\Kernel$ satisfying~\eqref{HL-eq-repro}, 
but instead of~\eqref{HL-eq-integral}, this auxiliary space satisfies
\begin{align*}
\int_\Domain f(x)\, d\mu(x) &= 0, \quad \text{for all $f\in \RKHS$}.
\end{align*}
Thus the function $\One$ is not an element of this space.
 
We now extend $\RKHS$ into the space $\RKHS^{\gamma},$
which consists of all functions of the form $g=a \One + f$, where $a \in \Real$,
and $f \in \RKHS,$ with the norm $\norm{\cdot}_{\RKHS^{\gamma}}$ defined by
\begin{align*}
\norm{g}_{\RKHS^{\gamma}}^2 &= |a|^2 + \frac{1}{\gamma}\norm{f}_\RKHS^2.
\end{align*}
It is easily verified that $\RKHS^{\gamma}$ is an \textsc{rkhs} with reproducing kernel
\begin{align*}
\Kernel_{\gamma}(x,y) &= 1 + \gamma \Kernel(x,y),
\end{align*}
where $\Kernel$ is the reproducing kernel of $\RKHS.$
In particular, the space $\RKHS^{\gamma}$ with kernel $\Kernel_{\gamma}$ 
contains the function $\One$ and satisfies both
properties~\eqref{HL-eq-repro} and~\eqref{HL-eq-integral}.
 
For functions on the domain $\Domain^d$ we consider the tensor product space
$\RKHS_d := \bigotimes_{k=1}^d \RKHS^{\gamma_{k}}$
where $1 \geqslant \gamma_1 \cdots \geqslant \gamma_d \geqslant 0.$ 
This is an \textsc{rkhs} of functions on $\Domain^d$ with reproducing kernel
$\Kernel_d(x,y) := \prod_{k=1}^d (1 + \gamma_k \,\Kernel(x_k,y_k))$
where $x_k,y_k\in\Domain$ are the components of $x,y\in \Domain^d.$ 
Also
\begin{align*}
\int_{\Domain^d} f(x)\,d\mu_d(x) &= \langle \One, f \rangle_{\RKHS_d},
\end{align*}
where $\mu_d$ is the product measure, $\langle\cdot,\cdot\rangle_{\RKHS_d}$
is the scalar product on the tensor product space $\RKHS_d,$ and $\One$ is the
constant function on $\Domain^d$ with value $1$. 

The specific setting for this paper is that of Kuo and Sloan~\cite{KuoS05}, with $s:=2,$
except that we allow quadrature rules with arbitrary weights.
We now describe this setting.
We take our domain $\Domain$ to be the unit sphere 
$\Sphere^2 := \{x \in \Real^3 \mid x_1^2 + x_2^2 + x_3^2 = 1 \}$,
and consider the real space $L_2(\Sphere^2)$ with respect to the uniform probability measure $\mu$
on $\Sphere^2$.
We use the orthonormal basis of real spherical harmonics
$Y_{\ell,m}(x),$ $\ell=0,\ldots,\infty,$ $m=1,\ldots,2\ell+1,$ 
as specified by Hesse, Kuo and Sloan \cite[Section 3.1]{HesKS07}.
As per \cite[(5)]{HesKS07}, the addition theorem for spherical harmonics with this normalization yields
\begin{align*}
\sum_{m=1}^{2\ell+1} Y_{\ell,m}(x)\ Y_{\ell,m}(y)
&=
(2 \ell + 1) P_{\ell}(x \cdot y)
\end{align*}
for all $x, y \in \Sphere^2$, where
$P_{\ell}$ is the Legendre polynomial of degree $\ell.$

For any function $f \in L_2(\Sphere^2),$ we expand $f$ in the Fourier series
\begin{align*}
f(x) &= \hat{f}_{0,0} + \sum_{\ell=1}^{\infty} \sum_{m=1}^{2\ell+1} \hat{f}_{\ell,m} Y_{\ell,m}(x).
\end{align*}

For a positive dimension weight $\gamma$, we define the \textsc{rkhs}
\begin{align*}
\Hilbert_{1,\gamma}^{(r)} 
&:= \{ f : \Sphere^2 \To \Real \mid \norm{f}_{\Hilbert_{1,\gamma}^{(r)}} < \infty \},
\intertext{where}
\langle f, g \rangle_{\Hilbert_{1,\gamma}^{(r)}} 
&:=
\hat{f}_{0,0}\, \hat{g}_{0,0} +
\gamma^{-1} \sum_{\ell=1}^{\infty} \sum_{m=1}^{2\ell+1} \big( \ell (\ell+1)\big)^r \,\hat{f}_{\ell,m}\, \hat{g}_{\ell,m}.
\end{align*}
Kuo and Sloan \cite{KuoS05} show that the reproducing kernel of $\Hilbert_{1,\gamma}^{(r)}$ is
\begin{align*}
\kernel_{1,\gamma}^{(r)}(x,y)
&:=
1 + \gamma A_r(x \cdot y), \quad \text{where for\ } z \in [-1,1],
\\
A_r(z) 
&:= 
\sum_{\ell=1}^{\infty} \frac{2\ell+1}{\big(\ell(\ell+1)\big)^{r}} P_{\ell} (z).
\end{align*}
The sum defining $A_r$ converges when $r > 3/2.$

For $\gamma := (\gamma_{d,1},\ldots,\gamma_{d,d}),$ we now define the tensor product space 
\begin{align*}
\Hilbert_{d,\gamma}^{(r)} &:= \bigotimes_{k=1}^d \Hilbert_{1,\gamma_{d,k}}^{(r)}.
\end{align*}
This is a weighted \textsc{rkhs} on $(\Sphere^2)^d,$ with reproducing kernel
\begin{align*}
\kernel_{d,\gamma}^{(r)}(x,y)
&:=
\prod_{k=1}^d \kernel_{1,\gamma_{d,k}}^{(r)}(x_k,y_k).
\end{align*}

Kuo and Sloan \cite{KuoS05} studied equal weight (\textsc{qmc}) quadrature on the space $\Hilbert_{d,\gamma}^{(r)}$,
and found that it is strongly tractable if and only if 
$\sum_{k=1}^d\gamma_{d,k} < \infty$ as $d \To \infty.$
The definition of strong tractability used by Kuo and Sloan \cite{KuoS05} is specific to \textsc{qmc} quadrature.
Here we expand the definition to quadrature with arbitrary weights.

We suppose for given $n > 0$
we can find points $\{x_1,\ldots,x_n\} \subset (\Sphere^2)^d$ and corresponding quadrature weights,
defining the quadrature rule $Q_{n,d}$ on $\Hilbert_{d,\gamma}^{(r)}$,
with worst case error $e_{n,d} := e(Q_{n,d}).$
We also define the quadrature rule $Q_{0,d} := 0$ so
that the corresponding worst case quadrature error is 
$e_{0,d} = \norm{\One}_{\Hilbert_{d,\gamma}^{(r)}} = 1$.

For $\varepsilon \in (0, 1)$ we want to find the smallest cost in terms of the number of function evaluations,
that is the smallest 
$n = \operatorname{cost}(d,\varepsilon)$ 
for which points $\{x_1, \ldots, x_n\} \subset (\Sphere^2)^d$ exist such
that $e_{n,d} \leqslant \varepsilon\, e_{0,d}.$ 
The integration problem for arbitrary quadrature weights in the worst-case setting is said to be \emph{strongly tractable} in the space 
$\Hilbert_{d,\gamma}^{(r)}$ if
\begin{align}
\label{eq-strong-tractability}
\operatorname{cost}(d,\varepsilon) &\leqslant C \varepsilon^{-p},
\end{align}
where $C$ and $p$ are non-negative constants independent of $\varepsilon$ and $d$. 
If~\eqref{eq-strong-tractability} holds then the infimum of $p$
is called the $\varepsilon$-exponent of strong tractability. 

Since, for a given finite set of points, quadrature with optimal weights yields an error
no larger than that of \textsc{qmc} quadrature, 
strong tractability for arbitrary weight quadrature rules holds for $\Hilbert_{d,\gamma}^{(r)}$ 
whenever strong \textsc{qmc} tractability holds.
In particular, as a consequence of a theorem of Kuo and Sloan \cite[Theorem 4]{KuoS05},
strong tractability for arbitrary weight quadrature rules holds for positive dimension weights
$\gamma_{d,k}$ when
\begin{align*}
\mathop{\limsup}_{d \To \infty} \sum_{k=1}^d \gamma_{d,k} &< \infty. 
\end{align*}
 
Hesse, Kuo and Sloan \cite{HesKS07} go on to construct sequences of \textsc{qmc} rules 
on the space $\Hilbert_{d,\gamma}^{(r)}$,
and prove that their worst case error converges at least as quickly as
the Monte Carlo error rate of $\OrderH(n^{-1/2}),$
where $n$ is the cost of the quadrature rule in terms of the number of points.

The work of Hickernell and Wo\'znia\-kowski~\cite{HicW01},
and Sloan and Wo\'znia\-kowski~\cite{SloW01},
on the weighted Korobov space of periodic functions on the unit cube,
and the work of Wasil\-kowski and Wo\'znia\-kowski~\cite{WasW99} 
on \textsc{wtp} quadrature on non-periodic functions on the unit cube,
combined with the observations above on strong tractability,
suggests bounds on the worst case error for the case of 
quadrature with arbitrary weights on the space $\Hilbert_{d,\gamma}^{(r)}$.
In the case of exponentially decreasing weights, as studied here,
one might expect that for $r > 3/2,$ given $\delta>0,$
there exist points on $(\Sphere^2)^d$ yielding an optimal weight quadrature rule whose
worst-case error would have an upper bound independent of $d,$ of order $\OrderH(n^{-r/2+\delta}).$
The specific form of this upper bound is suggested by the analysis of Sloan and Wo\'zniakowski
of \textsc{qmc} rules on the torus \cite[Theorem 3]{SloW01}.
The analysis in Sections 4, 5 and 6 below 
shows that the \textsc{da} algorithm satisfies the upper bound suggested here,
given the sequences of point sets on $\Sphere^2$ used in the numerical examples of Section 6.

\section{Optimization Problem}
\label{HL-section-Problem}

We first describe the optimization problem in the general \textsc{rkhs} setting, 
as given in Section~\ref{HL-section-Setting}.

Assume that a sequence of distinct quadrature points $x_1,x_2,\ldots \in \Domain,$
and a sequence of positive integers $n_0 < n_1 < \ldots$ are given
and are the same for all spaces $\RKHS^{\gamma}.$
The quadrature rules for $\RKHS^{\gamma}$
are then defined as some element of 
$V_j^{\gamma} := \linspan\{k^{\gamma}_{x_1},\ldots,k^{\gamma}_{x_{n_j}}\} \subset \RKHS^{\gamma}.$
Note that $V_j^{\gamma} \subset V_{j+1}^{\gamma}$ for all $j \geqslant 0.$
 
Denote the optimal rule in $V_j^{\gamma}$ by $q_j^{\gamma}.$ 
Now define the pair-wise orthogonal spaces
$U_j^{\gamma}$ by $U_0^{\gamma} := V_0^{\gamma},$ and by the orthogonal decomposition
$V_{j+1}^{\gamma} = V_j^{\gamma} \oplus U_{j+1}^{\gamma}.$
Using the fact that the $q_j^{\gamma}$ are optimal, it follows that
\begin{align*}
\delta_{j+1}^{\gamma} &:= q_{j+1}^{\gamma} - q_j^{\gamma}  \in U_{j+1}^{\gamma}
\end{align*}
and $\delta_0 := q_0^{\gamma} \in U_0^{\gamma} = V_0^{\gamma}.$ 
Note that,
while the dimension of $U_{j+1}^{\gamma}$ is $n_{j+1}-n_j,$ independently of $\gamma,$
\begin{align}
U_{j+1}^{\gamma} &\neq \linspan\{k_{x_{1+n_j}^{\gamma}},\ldots,k_{x_{n_{j+1}}^{\gamma}}\},
\label{HL-eq-fundamental-reason}
\end{align}
since the functions $k_{x_m}$ and $k_{x_n}$ for $m \neq n$ are, in general, not orthogonal to each other. 

We use the notation $\Indices:= \Natural^d$,
treating elements of $\Indices$ as indices,
with a partial order such that for $i, j \in \Indices$,
$i \leqslant j$ if and only if $i_h \leqslant j_h$ for all components.

For a index $i \in \Indices,$ let $\Down i$ denote the \emph{down-set} of $i,$
defined by \cite[p. 13]{DavP90}
\begin{align*}
\Down i &:= \{ j \in \Indices \mid j \leqslant i \}.
\end{align*}
Subsets of $\Indices$ are partially ordered by set inclusion.
For a subset $I \subset \Indices,$ let $\Down I$ denote the down-set of $I,$
defined by
$\Down I := \bigcup_{i \in I} \Down i.$
Then $\Down I$ is the smallest set $J \supseteq I$
such that if $i \in J$ and $j \leqslant i$ then $j \in J$.
Thus $\Down \Down I = \Down I.$

A sparse grid quadrature rule is then of the form
\begin{align*}
q \in V_I &:= \sum_{j\in I} \bigotimes_{k=1}^d V_{j_k}^{\gamma_{d,k}}
\end{align*}
for some index set $I.$ 
The orthogonal decomposition
$V_j^{\gamma} = \bigoplus_{i=1}^j U_i^{\gamma}$
and the observation~\eqref{HL-eq-fundamental-reason} 
yield the multidimensional orthogonal decomposition
\begin{align*}
V_I &= \bigoplus_{j\in \Down{I}} \bigotimes_{k=1}^d U_{j_k}^{\gamma_{d,k}}.
\end{align*}
A short derivation shows that an optimal $q\in V_I$ is given by
\begin{align*}
q_I^* = \sum_{j \in \Down{I}} \bigotimes_{k=1}^d \delta^{\gamma_{d,k}}_{j_k}.
\end{align*}
Thus both $V_I$ and $q_I^*$ are obtained in terms of the down-set $\Down{I},$
effectively restricting the choice of the set $I$ to index sets which are also down-sets.
(This construction is similar to the general sparse grid construction for functions on 
the $d$-dimensional torus, examined by Gerstner and Griebel \cite[Section 3.1]{GerG03}.)

This leads us to defining the concepts of an \emph{admissible} index set,
and an \emph{optimal} index set.
An admissible index set $I$ satisfies the \emph{admissibility condition}
(similar to that of Gerstner and Griebel \cite[Section 3.1]{GerG03})
\begin{align}
I &= \Down I.
\label{HL-eq-admiss}
\end{align}

An optimal index set is one which minimizes the error for a given cost, 
or minimizes the cost for a given error.
Here, the cost is the number of quadrature points, which is the dimension of $V_I$.
(This is a similar concept of optimality to that mentioned by Griebel and Knapek
in the context of approximation spaces \cite[Section 5]{GriK00}.)

We now make the definition of an optimal index set more precise.
We first define $\nu_{j_k} := \dim U_{j_k}^{\gamma_{d,k}}$ and 
$\delta_{j_k}^{(k)} := \delta_{j_k}^{\gamma_{d,k}}.$
For the remainder of this section, we use $\varepsilon \in (0,1)$ to denote the required upper bound
on quadrature error.
The optimization problem then uses the following definitions.
\begin{Definition}
\label{HL-def-nu-p}
For index $j \in \Indices$, define 
\begin{align*}
  \nu_j &:= \prod_{k=1}^d \nu_{j_k}, \quad
  \Delta_j := \bigotimes_{k=1}^d \delta_{j_k}^{(k)}, \quad
  p_j:=\norm{\Delta_j}^2, \quad 
  \Eff_j := p_j/\nu_j.
\end{align*}
For subset $I \subset \Indices$, define
\begin{align*}
\nu(I) &:= \sum_{j\in I} \nu_j, \quad
p(I) := \sum_{j\in I} p_j.
\intertext{Also, define $P := 1-\varepsilon^2.$}
\intertext{Here, $j_k$ is the $k$th component of the index $j.$}
\end{align*}
\end{Definition}
We use these definitions in the following sense.
The quadrature rule $q_I$ is given by a sum of incremental rules $\Delta_j,$
indexed by the multi-index $j$.
The ``profit'' $p_j$ for each incremental rule is its squared norm.
The cost $\nu_j$ of each incremental rule is the number of extra points 
the incremental rule contributes to 
the overall quadrature rule, assuming that the admissibility condition~\eqref{HL-eq-admiss} applies.
Thus the cost $\nu(I)$ of a quadrature rule, 
as given by the number of function evaluations,
is just the total cost of the incremental rules.
The ratio $\Eff_j = p_j/\nu_j$ is called the \emph{efficiency} of 
the incremental rule $\Delta_j.$

Due to the properties of $\nu_{j_k}$ and $\Delta_j,$
$\nu$ and $p$ satisfy
\begin{align}
\nu_j, \nu(I) \in \Natural_+, \quad
0 < p_j < 1, \quad
0 < p(I) < 1, \quad
p(\Indices) = 1.
\label{HL-eq-nu-p}
\end{align}

We now consider the following optimization problem,
posed as a minimization problem on the variable $I \subset \Indices$.
\begin{Problem}\label{HL-prob-0}
\begin{align*}
\text{Minimize} \quad &\nu(\Down I), \quad \text{subject to} \quad p(I) \geqslant P,
\end{align*}
for some $0 < P < 1$, where $\nu$ and $p$ satisfy~\eqref{HL-eq-nu-p}.
\end{Problem}
In other words, 
given a required upper bound $\varepsilon$ on the quadrature error,
the problem is to find the subset $I \subset \Indices$
with the smallest cost $\nu(\Down I) = \sum_{j\in \Down I} \nu_j$,
satisfying the constraint $1-\sum_{j\in I} p_j \leqslant \varepsilon^2$.

Optimization Problem~\ref{HL-prob-0} can have multiple solutions,
since for $H, I, J \subset \Indices$ if $J = \Down H = \Down I$
and both $p(H) \geqslant P$ and $p(I) \geqslant P$ then both
$H$ and $I$ are solutions to Optimization Problem~\ref{HL-prob-0}.
The following problem breaks this tie.

\begin{Problem}\label{HL-prob-alt-dcks}
\begin{align*}
\text{Maximize} \quad p(I) \quad \text{subject to}\ I\ \text{solving Optimization Problem~\ref{HL-prob-0}}.
\end{align*}
\end{Problem}

The solution of Optimization Problem~\ref{HL-prob-alt-dcks} satisfies the admissibility condition
\eqref{HL-eq-admiss}:
\begin{Lemma}\label{HL-lemma-dcks-adm}
If $I$ is a solution of Optimization Problem~\ref{HL-prob-alt-dcks}, then $I = \Down I.$
\end{Lemma}
\begin{proof}
  Let $J=\Down I$, where $I$ is a solution of Optimization Problem~\ref{HL-prob-alt-dcks}, 
  and therefore of Optimization Problem~\ref{HL-prob-0}.
  Then $I \subset J$ and thus $J$ satisfies the constraints of Optimization Problem~\ref{HL-prob-0}, since $p_i > 0$.
  Therefore $J$ is also a solution of Optimization Problem~\ref{HL-prob-0}, since $\nu(\Down J) = \nu(\Down I)$.  
  If $I \subsetneq J,$ it follows from
  $p_i>0$ that $p(J) > p(I),$ and so $I$ cannot be optimal. 
  Therefore $I = J$.
\qed\end{proof}

In view of the admissibility condition~\eqref{HL-eq-admiss}, 
we reformulate Optimization Problem~\ref{HL-prob-alt-dcks} as:
\begin{Problem}\label{HL-prob-dcks}
\begin{align*}
\text{Minimize} \quad &\nu(I), \quad 
\text{subject to} \quad I=\Down I, \quad \text{and} \quad p(I) \geqslant P,
\end{align*}
for some $0 < P < 1$, where $\nu$ and $p$ satisfy~\eqref{HL-eq-nu-p}.
\end{Problem}

As pointed out by (e.g.) Griebel and Knapek \cite{GriK00,GriK09},
some sparse grid problems can be formulated and solved as knapsack problems.
The resulting solution is optimal in terms of total profit for a given cost.

We call Optimization Problem~\ref{HL-prob-dcks} a \emph{down-set-constrained} binary knapsack problem. 
Each item in the knapsack is an incremental rule.
The relationships between Optimization Problem~\ref{HL-prob-dcks} and other more well-known knapsack problems
are described in more detail in Section~\ref{HL-section-Optimality}.

\section{Algorithm}
\label{HL-section-Optimality}
\subsection*{The dimension adaptive algorithm}
The dimension adaptive (\textsc{da}) algorithm to choose the set $I$
in the setting where $\Domain$ is $\Sphere^1$ is described in \cite{HegL11}.
The algorithm is quite general, and applies equally well to the current setting, where $\Domain$ is $\Sphere^2$.
We repeat the algorithm here as Algorithm~\ref{HL-alg-dimadapt},
with some changes in notation.
This is a greedy algorithm for Optimization Problem~\ref{HL-prob-dcks}.

The detailed description of Algorithm~\ref{HL-alg-dimadapt} uses the following notation.
Given the down-set $I \subset \Indices,$
we define $M(I)$ to be the set of minimal elements of $\Indices \setminus I$,
in other words,
\begin{align*}
M(I) &:= \big\{i \in \Indices \setminus I \mid I \cup \{i\} = \Down(I \cup \{i\}) \big\}.
\end{align*}

Given $j \in \Indices,$ define $S(j),$ 
the \emph{forward neighbourhood} of $j,$ \cite[p. 71]{GerG03} as
\begin{align*}
S(j) &:= 
\left\{i \in \Indices \mid j < i\ \text{and}\ (j \leqslant \ell < i \Rightarrow \ell=j) \right\},
\end{align*}
that is, $S(j)$ is the set of minimal elements of $\{i \in \Indices \mid j < i\}.$

\begin{algorithm}[!ht]
\KwData{error $\varepsilon$, incremental rules $\Delta_j$ and their costs
  $\nu_j$ for $j\in \Indices$}
\KwResult{$\varepsilon$ approximation $q$ and index set $I$}
$I := I_{(0)} := \{0\};\ $
$q := q_{(0)} := \Delta_0$\;
\While{$\norm{\One-q} > \varepsilon$}{
  $j^{(t+1)} := \argmax_i \{\Eff_i \mid i \in M(I_{(t)})\}$\;
  $I := I_{(t+1)} := I_{(t)} \cup \{j^{(t+1)}\};\ $
  $q := q_{(t+1)} := q + \Delta_{j^{(t+1)}}$ \;
}
\caption{The dimension adaptive (\textsc{da}) algorithm.}
\label{HL-alg-dimadapt}
\end{algorithm}


To show how the minimal set $M(I)$ can be effectively determined at each step of
Algorithm~\ref{HL-alg-dimadapt}, we use the subscript $(t)$ to keep track of the steps.

We treat Algorithm~\ref{HL-alg-dimadapt} as starting with 
the down-set $I_{(0)}:=\{0\},$
the index $j^{(0)}:=0$ and 
the minimal set $M_{(0)}:=\{0\}.$
At the start of step $t+1$ of the algorithm, 
we have previously computed the 
down-set $I_{(t)},$
the index $j^{(t)}$ and the 
minimal set $M_{(t)},$
such that 
$j^{(t)} \in M_{(t)},$
and (for $t>0$)
$M_{(t)} = M(I_{(t-1)})$
and $I_{(t)} = I_{(t-1)} \cup \{j^{(t)}\}.$
To construct $M_{(t+1)} = M(I_{(t)}),$
set 
\begin{align*}
S_{(t+1)} &:= \big\{i \in S(j^{(t)}) \mid I_{(t)} \cup \{i\} = \Down(I_{(t)} \cup \{i\})\big\}
\intertext{and}
M_{(t+1)} &:= \left(M_{(t)} \setminus \{j^{(t)}\}\right) \cup S_{(t+1)}.
\end{align*}
Note that
\begin{align*}
   \left(M_{(t)} \setminus \{j^{(t)}\}\right) \cup S_{(t+1)}
&= \left(M_{(t)} \cup S_{(t+1)}\right) \setminus \{j^{(t)}\}.
\end{align*}
As every minimal element of $\Indices \setminus I_{(t)}$ is either an element of $M_{(t)}$ 
(but not $j^{(t)}$) or an element of $S_{(t+1)}$ we see that $M_{(t+1)}$
is equal to $M(I_{(t)}).$

\begin{remark}
The details of the algorithm of Gerstner and Griebel \cite[Section 3.2]{GerG03} 
are similar to those of Algorithm~\ref{HL-alg-dimadapt}.
Their \emph{old index set} $\mathcal{O}$ corresponds to our down-set $I,$
and their \emph{active index set} $\mathcal{A}$ corresponds to our minimal set $M(I).$
\end{remark}

\subsection*{Complexity of Algorithm~\ref{HL-alg-dimadapt}}

Here we briefly examine the computational effort involved in determining the down-set $I_{(t)}$ using 
Algorithm~\ref{HL-alg-dimadapt}.
As noted above, Algorithm~\ref{HL-alg-dimadapt} is similar to 
the algorithm of Gerstner and Griebel.
Thus an analysis of complexity along the lines of their \cite{GerG03} Section 4.4 is appropriate.

At step $t+1$ of Algorithm~\ref{HL-alg-dimadapt}, the index $j^{(t)}$ is removed from 
the minimal set $M_{(t)},$
the backward neighbourhood of each of $d$ potential new indices is checked,
and up to $d$ new indices are inserted to form the updated set $M_{(t+1)}.$
Thus the set $M_{(t)}$ contains at most $t (d-1)$ elements.
(Given $i \in \Indices,$ the \emph{backward neighbourhood} $R(i)$ of $i$ 
is the set of maximal elements of $\{\ell \in \Indices \mid \ell < i\}$ \cite[p. 71]{GerG03}.)

The checking of the backward neighbourhood of each index $i \in S(j^{(t)})$
involves looking up each of the $d-1$ indices in $R(i) \setminus \{j\}$
to see if it is already contained in the index set $I_{(t)}.$
Assuming that the set $I_{(t)}$ is implemented using a hash table \cite{Gri98},
the effort involved in this lookup is independent of the size of $I_{(t)}$ 
and is therefore approximately $O(d^2).$

\begin{figure}[!ht]
\centering
\begin{tikzpicture}
\begin{loglogaxis}[
height=80mm,
width=80mm,
legend style={align=center},
legend pos=outer north east,
xmin=1.0,
xmax=1.0e3,
ymin=1.0,
ymax=1.0e5,
xlabel={Step number $t+1$ (size of $I_{(t)}$)},
ylabel={}
]
\addplot[line width=1pt, color=black, dotted,  mark=none] table[x=Indices,y=I_lookups] {serrwtpcount-3-04-0.5.dat};
\addplot[line width=1pt, color=brown!80!black, mark=x, mark size={2pt}] table[x=Indices,y=Minimals] {serrwtpcount-3-04-0.5.dat};
\addplot[line width=1pt, color=blue,           mark=*, mark size={1pt}] table[x=Indices,y=M_effort] {serrwtpcount-3-04-0.5.dat};
\addplot[line width=1pt, color=red,   dashed,  mark=none] table[x=Indices,y=M_bound] {serrwtpcount-3-04-0.5.dat};
\legend{Number of\\look ups of $I_{(t)}$,Size $\abs{M_{(t)}}$,Sum over $t$ of\\$d^2 \log_2(\abs{M_{(t)}})$,$t d^2\log_2(t d)$}
\end{loglogaxis}
\end{tikzpicture}
\caption{Effort involved in calculating \textsc{da} rules for~$(\Sphere^2)^4$, $r=3$\,, $\gamma_{4,k}=0.5^k$.}
\label{HL-figure-04-0-5-DA-effort}
\end{figure}
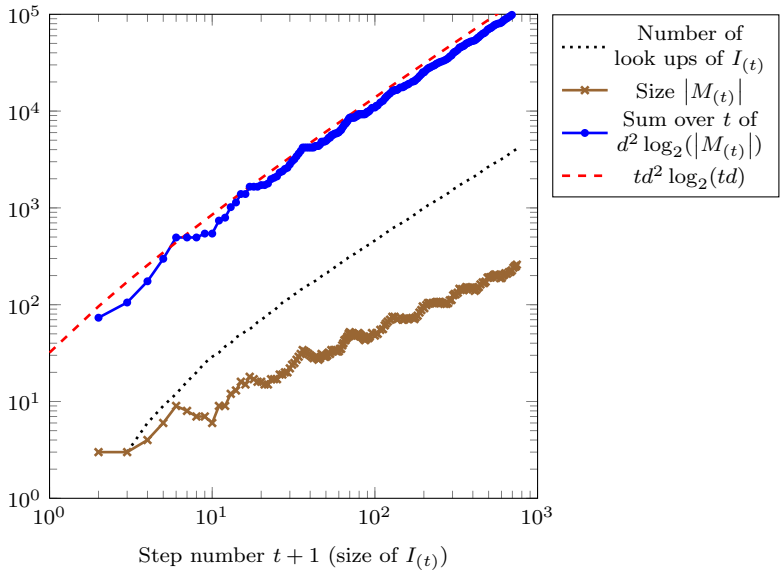

The index $j^{(t)}$ removed from $M_{(t)}$ is the index $i$ with 
the maximum value of the efficiency $\Eff_i.$
Let us assume that $M_{(t)}$ is implemented as an implicit heap,
or as a priority queue with the same time complexity for deletion of the largest element and for 
insertion of an element \cite{Jon86}.
To find $j^{(t)}$ and remove it from $M_{(t)}$ takes 
$O(d\log_2(\abs{M_{(t)}})) = O(d\log(t d))$ operations, 
such as comparison and index manipulation.
Each insertion also takes at most $O(d\log(t d))$ operations on $M_{(t)}$. 
Each insertion also requires the calculation of $\nu_i$ and $p_i$ for each index $i,$
each requiring effort $O(d),$ since each these are calculated via 
the product of one dimensional values.
At each step, the total effort required is therefore at most $O(d^2\log(t d)).$
The overall effort up to step $t+1$ is therefore at most $O(t d^2\log(t d)).$

Figure~\ref{HL-figure-04-0-5-DA-effort} shows some components of the cumulative effort involved in
calculating the \textsc{da} rules for the numerical example of
$(\Sphere^2)^4$, $r=3$,\, $\gamma_{4,k}=0.5^k,$
given the assumptions above.
Figure~\ref{HL-figure-0-5-DA-effort-2-4-8-16} compares the cumulative effort for $M_{(t)}$
for the \textsc{da} rules for the numerical example of
$(\Sphere^2)^d$, $r=3$,\, $\gamma_{d,k}=0.5^k,$ for $d=2,4,8,16,$
under the same assumptions.
See also Figure~\ref{HL-figure-0-5-HL-vs-WW} in Section~\ref{HL-section-Results},
and the detailed description of the numerical examples given in that Section.

In general, the number of new indices added to the minimal set $M_{(t)}$ depends 
not only on the dimension $d,$ but also on the dimension weights $\gamma_{d,k}.$
Thus the dimension weights determine the set $M_{(t)}$ at each step.
In particular, the size of $M_{(t)}$ may be much less than $t d.$
The dimension weights also determine the rate of convergence of the quadrature error $e(q_{(t)})$ to zero,
via  $\nu_{j^{(t)}}$ and $p_{j^{(t)}}$ for each index ${j^{(t)}},$
and hence determine the number of iterations $t$ needed before the error is reduced to $\varepsilon.$

A fully detailed analysis of the computational effort of Algorithm~\ref{HL-alg-dimadapt},
including an accurate estimate of $\abs{M_{(t)}}$ as a function of 
$d,$ $\gamma_{d,k},$ and $t,$ is thus potentially very complicated, 
and is beyond the scope of this paper.

\begin{figure}[!ht]
\centering
\begin{tikzpicture}
\begin{loglogaxis}[
height=80mm,
width=80mm,
legend pos=outer north east,
xmin=1.0,
xmax=1.0e3,
ymin=1.0,
ymax=1.0e7,
xlabel={Step number $t+1$ (size of $I_{(t)}$)},
ylabel={}
]
\addplot[line width=1pt, color=brown!80!black, mark=x, mark size={2pt}] table[x=Indices,y=M_effort] {serrwtpcount-3-02-0.5.dat};
\addplot[line width=1pt, color=blue,           mark=*, mark size={1pt}] table[x=Indices,y=M_effort] {serrwtpcount-3-04-0.5.dat};
\addplot[line width=1pt, color=green!70!black, mark=none]               table[x=Indices,y=M_effort] {serrwtpcount-3-08-0.5.dat};
\addplot[line width=1pt, color=red,    dashed, mark=none]               table[x=Indices,y=M_effort] {serrwtpcount-3-16-0.5.dat};
\legend{$d=2$, $d=4$, $d=8$, $d=16$}
\end{loglogaxis}
\end{tikzpicture}
\caption{Sum over $t$ of $d^2 \log_2(\abs{M_{(t)}})$ for \textsc{da} rules for~$(\Sphere^2)^{d}$, $r=3$\,, $\gamma_{d,k}=0.5^k$.}
\label{HL-figure-0-5-DA-effort-2-4-8-16}
\end{figure}

\subsection*{Optimality of Algorithm~\ref{HL-alg-dimadapt}} 

Under certain conditions on $\nu$ and $p$, Algorithm~\ref{HL-alg-dimadapt}
solves Optimization Problem~\ref{HL-prob-dcks}. 
To see this, consider \emph{monotonicity} with respect to the lattice partial ordering of $\Indices$.
\begin{Definition}
\label{HL-def-monotonicity}
The function $p\in\Real_+^{\Indices}$ is \emph{monotonically decreasing} 
if $i < j$ implies that $p_i \geqslant p_j$.
If $i < j$ implies that $p_i > p_j$,
then $p\in\Real_+^{\Indices}$ is \emph{strictly decreasing}.
The definitions of ``monotonically increasing'' and 
``strictly increasing'' are similar.
\end{Definition}
Using Definition~\ref{HL-def-monotonicity}, the following theorem holds.
\begin{Theorem}\label{HL-theorem-alg-dimadapt}
  If $p\in\Real_+^{\Indices}$ is strictly decreasing and 
  $\nu\in\Natural_+^{\Indices}$ is monotonically increasing, 
  then Algorithm~\ref{HL-alg-dimadapt} yields a quadrature rule $q$ and index set $I$
  such that $I$ solves the down-set-constrained knapsack Optimization Problem~\ref{HL-prob-dcks},
  for $P=p(I)=\norm{\One-q}^2$.
\end{Theorem}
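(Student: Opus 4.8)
The plan is a greedy-exchange argument organised around the efficiencies $\Eff_j = p_j/\nu_j$. Write the elements of the set $I$ returned by Algorithm~\ref{HL-alg-dimadapt} in the order they are inserted, $e_1 = 0, e_2, \dots, e_m$, and let $I_t = \{e_1,\dots,e_{t+1}\}$ be the set after $t$ iterations, so $I = I_{m-1}$. Since $q = \sum_{j\in I}\Delta_j$ and the $\Delta_j$ are pairwise orthogonal with $\norm{\Delta_j}^2 = p_j$ and $p(\Indices)=1$, the loop condition $\norm{\One-q} > \epsilon$ is equivalent to $p(I_t) < P$; hence the loop runs exactly while $p(I_t) < P$ and stops with a down-set $I$ satisfying $p(I)\geqslant P$, so $I$ is already feasible for Optimization Problem~\ref{HL-prob-dcks}. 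The remaining task is to show $\nu(I) \leqslant \nu(J)$ for every down-set $J$ with $p(J) \geqslant P = p(I)$.

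First I would note that the hypotheses make $\Eff$ strictly decreasing on $\Indices$: if $i < j$ then $p_i > p_j$ and $0 < \nu_i \leqslant \nu_j$, so $\Eff_i > \Eff_j$. Using this, I would show that the inserted efficiencies are non-increasing, $\Eff_{e_1}\geqslant\Eff_{e_2}\geqslant\dots\geqslant\Eff_{e_m}$. The point is that once an index $i\notin I_t$ may be added to $I_t$ keeping a down-set, it may still be added to any larger $I_{t'}\not\ni i$; so when $e_{t+1}$ is chosen from the indices addable to $I_{t-1}$, either it was already addable to $I_{t-2}$ — and then $\Eff_{e_t}\geqslant\Eff_{e_{t+1}}$ because $e_t$ was the greedy pick there — or it became addable only when $e_t$ was inserted, which forces $e_t < e_{t+1}$ and hence $\Eff_{e_t} > \Eff_{e_{t+1}}$; the step $t=1$ is immediate since $e_1 = 0 < e_2$. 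Put $\rho := \Eff_{e_m} > 0$, the least of these efficiencies.

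The heart of the proof is a two-sided efficiency bound: $\Eff_i \geqslant \rho$ for every $i\in I$ (each such $i$ is some $e_t$), and $\Eff_i \leqslant \rho$ for every $i\notin I$. For the second bound, since $\Indices\setminus I$ is an up-set and each $\Down i$ is finite, any $i\notin I$ lies above a minimal element $i'$ of $\Indices\setminus I$, so $\Eff_i \leqslant \Eff_{i'}$; and $i'$ is addable to $I = I_{m-1}$, so either $i'$ was already addable to $I_{m-2}$ — giving $\Eff_{i'}\leqslant\Eff_{e_m} = \rho$, as $e_m$ was the greedy pick at the last iteration — or $e_m < i'$, giving $\Eff_{i'} < \rho$ (and the case $m=1$ is just $i > 0$). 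Granted these bounds, the exchange is routine: for a down-set $J$ with $p(J)\geqslant p(I)$ we get $p(J\setminus I)\geqslant p(I\setminus J)$, while $p(I\setminus J)\geqslant\rho\,\nu(I\setminus J)$ and $p(J\setminus I)\leqslant\rho\,\nu(J\setminus I)$; dividing through by $\rho > 0$ yields $\nu(J\setminus I)\geqslant\nu(I\setminus J)$, i.e. $\nu(J)\geqslant\nu(I)$. Thus $I$ minimises $\nu$ over the feasible down-sets and solves Optimization Problem~\ref{HL-prob-dcks} with $P = p(I)$.

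I expect the main obstacle to be precisely the second efficiency bound: arguing which indices are addable at the final iteration, exploiting the preservation of addability under enlarging a down-set, and using the monotonicity of $\Eff$ — which is exactly where both hypotheses, on $p$ and on $\nu$, enter — to handle the indices that became addable only at the very last step; the non-increasing ordering of the $\Eff_{e_t}$ needs the same care. One should also check, or take as implicit in the statement, that the loop terminates, which follows from $p(\Indices) = 1$. Everything else is bookkeeping.
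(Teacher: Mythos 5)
Your proof is correct, and it takes a genuinely different route from the paper's. The paper proceeds indirectly: it introduces an auxiliary \emph{unconstrained} greedy algorithm (Algorithm~\ref{HL-alg-greedy-ks}), invokes the classical theory of the binary knapsack problem (citing Martello and Toth) to assert that this algorithm minimises $\nu$ subject only to $p(I)\geqslant P$ (Lemma~\ref{HL-lemma-greedy-ks}), shows separately that under the stated monotonicity hypotheses the unconstrained greedy happens to produce down-sets (Lemma~\ref{HL-lemma-greedy-down-set}), transfers optimality across to the constrained problem via Lemma~\ref{HL-lemma-ks-adm}, and finally shows that Algorithm~\ref{HL-alg-greedy-ks} and Algorithm~\ref{HL-alg-dimadapt} generate the same sequence of sets (Lemma~\ref{HL-lemma-same}). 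You instead argue directly about Algorithm~\ref{HL-alg-dimadapt}: you establish that the inserted efficiencies $\Eff_{e_1}\geqslant\cdots\geqslant\Eff_{e_m}=:\rho$ are non-increasing, prove the threshold property $\Eff_i\geqslant\rho$ on $I$ and $\Eff_i\leqslant\rho$ off $I$ (reducing the latter to minimal elements of the complement, which is where the monotonicity of $\Eff$ and the ``addability persists under enlargement of a down-set'' observation do the work), and then run a standard exchange argument. This buys you a self-contained argument that does not defer the central optimality step to an external reference, and it in fact proves the slightly stronger statement that $I$ is optimal among \emph{all} feasible subsets, not merely feasible down-sets — which is also implicit in the paper's route via Lemmas~\ref{HL-lemma-greedy-ks} and~\ref{HL-lemma-ks-adm}, but you make it explicit. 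The cost is that you lose the paper's explicit bridge to the classical knapsack literature and to the continuous relaxation, which the paper uses for context. Your closing remark on termination is a little quick — one needs, for instance, that $\{j : \Eff_j \geqslant c\}$ is finite for every $c>0$ (which follows from $\nu_j\geqslant 1$ and $\sum_j p_j = 1$), so that no index with efficiency above the eventual threshold can be skipped forever — but the paper is equally brief on this point, so this is not a gap relative to the paper's own standard.
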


The proof of Theorem~\ref{HL-theorem-alg-dimadapt}
presented below proceeds in these stages.
\begin{enumerate}
\item
We introduce a related binary knapsack problem, 
and show that if $I$ is a solution of the binary knapsack problem,
and $I$ is also a down-set,
then $I$ is a solution of Optimization Problem~\ref{HL-prob-dcks}.
\item 
We describe a greedy algorithm for the binary knapsack problem (Algorithm~\ref{HL-alg-greedy-ks} below),
and show that if the efficiency $\Eff_j$ is strictly decreasing,
then each set $I$ produced by the greedy algorithm is a solution of the binary knapsack problem,
and is also a down-set, and therefore a solution of Optimization Problem~\ref{HL-prob-dcks}. 
\item
We show that if the efficiency is strictly decreasing, then Algorithm~\ref{HL-alg-greedy-ks} produces
the same sequence of sets as Algorithm~\ref{HL-alg-dimadapt}. 
\end{enumerate}

A binary (0/1) knapsack problem \cite{Dan57} related to Optimization Problem~\ref{HL-prob-dcks} is:
\begin{Problem}\label{HL-prob-ks}
\begin{align*}
\text{Minimize}
\quad
&\nu(I), 
\quad 
\text{subject to}
\quad
p(I) \geqslant P,
\end{align*}
for some $0 < P < 1$, where $\nu$ and $p$ satisfy~\eqref{HL-eq-nu-p}.
\end{Problem}
Usually a binary knapsack problem is posed as a maximization problem,
where the selection is from a finite set of items.
Here we have a minimization problem and a countably infinite set.
A finite minimization problem can always be posed as an equivalent maximization problem
\cite[p. 15]{MarT90}.
In the case of Problem~\ref{HL-prob-ks} this cannot be done, because the quantity to be maximized
(the sum of the costs of the elements not in the knapsack) would be infinite.
Instead, we must deal directly with the minimization form.

We now formulate a converse of Lemma~\ref{HL-lemma-dcks-adm}.
\begin{Lemma}\label{HL-lemma-ks-adm}
  If $I$ is a solution of the Optimization Problem~\ref{HL-prob-ks}, 
  and $I$ also satisfies the admissibility condition $I = \Down I,$ 
  then $I$ is a solution of Optimization Problem~\ref{HL-prob-dcks}.
\end{Lemma}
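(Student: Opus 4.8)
The plan is to prove the contrapositive via the standard comparison argument for knapsack-style problems, restricting attention to admissible competitors. Suppose $I$ solves Optimization Problem~\ref{HL-prob-ks} and $I = \Down I$. Let $I'$ be any feasible point of Optimization Problem~\ref{HL-prob-dcks}, that is, $I' = \Down{I'}$ and $p(I') \geqslant P$. Since the constraint set of Optimization Problem~\ref{HL-prob-dcks} is a subset of the constraint set of Optimization Problem~\ref{HL-prob-ks} (every down-set satisfying $p(I') \geqslant P$ is in particular a set satisfying $p(I') \geqslant P$), and $I$ is optimal for the latter, we have $\nu(I) \leqslant \nu(I')$.

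The only thing left to check is that $I$ itself is feasible for Optimization Problem~\ref{HL-prob-dcks}: it satisfies $p(I) \geqslant P$ because it solves Optimization Problem~\ref{HL-prob-ks}, and it satisfies $I = \Down I$ by hypothesis. Hence $I$ is a feasible point of Optimization Problem~\ref{HL-prob-dcks} whose objective value $\nu(I)$ is no larger than that of any other feasible point, so $I$ is a solution of Optimization Problem~\ref{HL-prob-dcks}. This completes the argument.

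I do not expect a genuine obstacle here; the lemma is essentially bookkeeping about which feasible region contains which. The one point requiring a moment's care is making explicit that adding the admissibility constraint $I = \Down I$ only shrinks the feasible set, so an optimum over the larger set that happens to lie in the smaller set is automatically an optimum over the smaller set — this is exactly the dual situation to Lemma~\ref{HL-lemma-dcks-adm}, where passing from $I$ to $\Down I$ preserved feasibility and did not increase cost. No monotonicity hypotheses on $\nu$ or $p$ are needed for this lemma; those enter only later, when one shows the greedy algorithm actually finds such an $I$.
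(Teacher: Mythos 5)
Your proof is correct and uses the same argument as the paper: the feasible set of Optimization Problem~\ref{HL-prob-dcks} is contained in that of Optimization Problem~\ref{HL-prob-ks}, $I$ is optimal for the larger problem and feasible for the smaller one (by the admissibility hypothesis), so it is optimal for the smaller one. One small slip: you announce you will ``prove the contrapositive,'' but what follows is a direct argument, not a contrapositive; the argument itself is fine.
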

\begin{proof}
  If $I$ is a solution of the Optimization Problem~\ref{HL-prob-ks}, then
  $p(I) \geqslant P$. 
  If $I$ also satisfies the admissibility condition $I = \Down I,$ then
  $I$ satisfies the constraints of Optimization Problem~\ref{HL-prob-dcks} and thus
  minimizes $p$ under these constraints, i.e., is a solution of Optimization Problem~\ref{HL-prob-dcks}.
\qed\end{proof}
This justifies our calling Optimization Problem~\ref{HL-prob-dcks} a down-set-constrained knapsack problem.

If, in Optimization Problem~\ref{HL-prob-ks} 
we identify each set $I \subset {\Indices}$ with its \emph{indicator function}
${\mathcal I} \in \{0,1\}^{\Indices},$ where ${\mathcal I}_i = 1$ if and only if $i \in I,$
we obtain a more usual formulation of the binary knapsack problem:
\begin{Problem}\label{HL-prob-ks-0-1}

\begin{align*}
\text{Minimize} \quad &\sum_{i\in {\Indices}} \nu_i\,{\mathcal I}_i, \quad 
\text{subject to}
\quad
\sum_{i\in {\Indices}} p_i\,{\mathcal I}_i \geqslant P,
\quad
{\mathcal I} \in \{0,1\}^{\Indices},
\end{align*}
for some $0 < P < 1$, where $\nu$ and $p$ satisfy~\eqref{HL-eq-nu-p}.
\end{Problem}

Solving the binary knapsack problem is hard in general,
but for certain values of the constraint $P$, a greedy algorithm yields the solution.
These values are exactly the values for which the solution of
the binary knapsack problem equals the solution of the \emph{continuous} knapsack problem,
which uses the same objective function $\nu$ as Optimization Problem~\ref{HL-prob-ks-0-1}, and
relaxes the constraints ${\mathcal I}_i\in\{0,1\}$ to ${\mathcal I}_i\in[0,1].$ 
Dantzig \cite[Figure 3, p. 274]{Dan57} illustrates this for the classical binary knapsack problem
-- the finite maximization problem.
Martello and Toth \cite[Theorem 2.1, p. 16]{MarT90} 
give an explicit solution for the continuous problem, and a more formal proof.

The greedy algorithm for Optimization Problem~\ref{HL-prob-ks}
is based on the efficiency $\Eff_j = p_j/\nu_j.$ 
The algorithms generates the initial
values of an enumeration $j^{(t)}$ of $\Indices,$ $t \in \Natural_+,$ satisfying
   $\Eff_{j^{(t)}} \geqslant \Eff_{j^{(t+1)}}.$
The algorithm recursively generates $I_{(t)}$ from $I_{(t-1)},$ 
until for some $T$ the condition
   $p(I_{(T-1)}) < P \leqslant p(I_{(T)})$
holds, where
\begin{align*}
  I_{(t)} := \bigcup_{s=1}^t j^{(s)}.
\end{align*}

\begin{algorithm}[!ht]
\KwData{error $\varepsilon$, incremental rules $\Delta_j$ and their costs
  $\nu_j$ for $j\in \Indices$}
\KwResult{$\varepsilon$ approximation $q$ and index set $I$}
$I := I_{(0)} := \{0\};\ $
$q := q_{(0)} := \Delta_0$\;
\While{$\norm{\One-q} > \varepsilon$}{
  $j^{(t+1)} := \argmax_i \{\Eff_i \mid i \in \Indices \setminus I_{(t)}\}$\;
  $I := I_{(t+1)} := I_{(t)} \cup \{j^{(t+1)}\};\ $
  $q := q_{(t+1)} := q + \Delta_{j^{(t+1)}}$ \;
}
\caption{The greedy algorithm for Optimization Problem~\ref{HL-prob-ks}.}
\label{HL-alg-greedy-ks}
\end{algorithm}

This greedy algorithm, listed as Algorithm \ref{HL-alg-greedy-ks},
has the following properties.
\begin{Lemma}\label{HL-lemma-greedy-ks}
For any $0 < P < 1$,
Algorithm~\ref{HL-alg-greedy-ks} terminates for some $t=T.$

For each $t \geqslant 1$, the set generated by Algorithm~\ref{HL-alg-greedy-ks}, $I_{(t)}$
is the solution to Optimization Problem~\ref{HL-prob-ks} for $P=p(I_{(t)}).$
\end{Lemma}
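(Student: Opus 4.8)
The plan is to prove the two assertions of Lemma~\ref{HL-lemma-greedy-ks} in turn, treating termination first and then optimality. For termination, observe that since $p(\Indices)=1 > P$ and the enumeration $j^{(t)}$ exhausts $\Indices$, the partial sums $p(I_{(t)})$ are strictly increasing (each $p_j > 0$) and converge to $1$; hence they must exceed $P$ after finitely many steps. The only subtlety is to confirm that an enumeration $j^{(t)}$ of $\Indices$ with $\Eff_{j^{(t)}} \geqslant \Eff_{j^{(t+1)}}$ actually exists and that Algorithm~\ref{HL-alg-greedy-ks} realizes it: here I would note that for each fixed threshold the set $\{i \mid \Eff_i \geqslant c\}$ is finite (because $\nu_i \geqslant 1$ and $\sum_i p_i = 1$ forces $p_i \to 0$, hence $\Eff_i \to 0$), so the $\argmax$ in the while-loop is always attained and the generated sequence is non-increasing in efficiency.

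For the optimality assertion, fix $t$, write $I := I_{(t)}$ and $P' := p(I)$, and suppose for contradiction that some $J \subset \Indices$ satisfies $p(J) \geqslant P'$ and $\nu(J) < \nu(I)$. The standard greedy-exchange argument for the continuous knapsack relaxation applies: because the items in $I$ are exactly those with the $t$ largest efficiencies, any element $i \in J \setminus I$ has efficiency $\Eff_i \leqslant \Eff_\ell$ for every $\ell \in I \setminus J$. I would then compare the two sets by ``paying'' for the profit deficit: $J$ must make up at least $p(I \setminus J) - 0$ in profit using items outside $I$, and since those items are no more efficient than the items of $I \setminus J$ it must spend at least as much cost $\nu$ to do so, contradicting $\nu(J) < \nu(I)$. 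Concretely, $p(I \setminus J) = \sum_{\ell \in I\setminus J} \Eff_\ell \nu_\ell$ while the profit $J$ draws from $\Indices \setminus I$ is $\sum_{i \in J \setminus I} \Eff_i \nu_i$, and $p(J) \geqslant p(I)$ forces $\sum_{i\in J\setminus I} \Eff_i\nu_i \geqslant \sum_{\ell\in I\setminus J}\Eff_\ell\nu_\ell$; combined with $\Eff_i \leqslant \min_{\ell \in I\setminus J}\Eff_\ell \leqslant \max_{i\in J\setminus I}\Eff_i$ this yields $\nu(J\setminus I) \geqslant \nu(I\setminus J)$, i.e. $\nu(J) \geqslant \nu(I)$.

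I expect the main obstacle to be handling the ties in efficiency cleanly, together with the fact that $I_{(t)}$ stops \emph{exactly} at a partial sum rather than at an arbitrary $P$. The lemma sidesteps the usual ``fractional last item'' complication by only claiming optimality for $P = p(I_{(t)})$, so the continuous relaxation has an \emph{integral} optimum here; but if efficiencies are merely non-increasing (not strictly decreasing) there may be several sets achieving the minimum cost, and the exchange inequalities above become non-strict. This is acceptable for the present lemma — we only need that $I_{(t)}$ \emph{is a} solution, not the unique one — so I would simply phrase the exchange argument with $\geqslant$ throughout and conclude $\nu(J) \geqslant \nu(I)$. (The strict-decrease hypothesis, which forces uniqueness and the down-set property, enters later when this lemma is combined with Lemma~\ref{HL-lemma-ks-adm}, not here.)
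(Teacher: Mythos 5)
Your proof is correct and follows essentially the same route as the paper: the termination step is the same observation that the non-increasing-efficiency enumeration exhausts $\Indices$ and so the partial sums of $p$ converge to $1>P$, while the optimality step is exactly the greedy-exchange argument that the paper delegates to Theorem~2.1 of Martello and Toth~\cite{MarT90}; you also usefully fill in a point the paper leaves implicit, namely that $\{i : \Eff_i \geqslant c\}$ is finite for every $c>0$ (since $\nu_i \geqslant 1$ and $\sum_i p_i = 1$), so the $\argmax$ in the while-loop is always attained and the enumeration exists. One small typo: the chain should read $\Eff_i \leqslant \min_{\ell\in I\setminus J}\Eff_\ell$ and $\min_{\ell\in I\setminus J}\Eff_\ell \geqslant \max_{i\in J\setminus I}\Eff_i$ (not $\leqslant$), though the surrounding prose makes the intended direction clear and the conclusion $\nu(J\setminus I) \geqslant \nu(I\setminus J)$ then follows as you state.
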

\begin{proof}
  The algorithm terminates because $j^{(t)}$ is an enumeration of ${\Indices}$ 
  and therefore $\sum_{t=1}^\infty p_{j^{(t)}} = 1,$ since $\sum_{t=1}^\infty \Delta_{j^{(t)}} = \One,$
  but $P < 1.$

When $p(I_{(t)}) = P$ the constraints of Optimization Problem~\ref{HL-prob-ks} are satisfied.
Furthermore, as the method used
the largest $\Eff_i$, the objective function $\nu$ is minimised for Optimization Problem~\ref{HL-prob-ks}.
A more detailed proof can be constructed along the lines of the proof of Theorem 2.1 of Martello and Toth 
\cite{MarT90}.
\qed\end{proof}

The construction of the enumeration used in Algorithm~\ref{HL-alg-greedy-ks}
requires sorting an infinite sequence and is
thus not feasible in general,
but in the case where $p$ is strictly decreasing and $\nu$ is
monotonically increasing, the enumeration can be done recursively in finite time.
%

\begin{Lemma}\label{HL-lemma-greedy-down-set}
If $p$ is strictly decreasing and $\nu$ is monotonically increasing,
at each step $t>0$ of Algorithm~\ref{HL-alg-greedy-ks},
the index $j^{(t+1)}$ produced by the algorithm is a minimal element of the set 
$\Indices \setminus I_{(t)}$.
Also $j^{(0)} = 0.$
Therefore $I_{(t+1)}$ is a down-set.
\end{Lemma}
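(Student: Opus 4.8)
The plan is to prove both claims by induction on $t$, working with the key structural fact that at every step the chosen index keeps $I_{(t)}$ a down-set. First I would establish the base case: at step $t=1$ we have $I_{(1)} = \{0\}$ (the algorithm initializes $I := \{0\}$), and $0$ is trivially the minimal element of $\Indices$, so $I_{(1)}$ is a down-set; moreover $\Eff_0$ is the largest efficiency, since $p$ strictly decreasing and $\nu$ monotonically increasing force $\Eff_i = p_i/\nu_i < p_0/\nu_0 = \Eff_0$ for every $i \neq 0$, so the enumeration genuinely starts with $j^{(1)} = 0$.

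For the inductive step, assume $I_{(t-1)}$ is a down-set. I would argue that the index $j^{(t)} = \argmax_i \{\Eff_i \mid i \notin I_{(t-1)}\}$ must be a minimal element of the complement $\Indices \setminus I_{(t-1)}$. Suppose not: then there is some $j < j^{(t)}$ with $j \notin I_{(t-1)}$. Since $j < j^{(t)}$, strict monotonicity gives $p_j > p_{j^{(t)}}$ and monotonicity of $\nu$ gives $\nu_j \leqslant \nu_{j^{(t)}}$, hence $\Eff_j = p_j/\nu_j > p_{j^{(t)}}/\nu_{j^{(t)}} = \Eff_{j^{(t)}}$, contradicting the maximality of $\Eff_{j^{(t)}}$ over the complement. (A tacit point here is that ties in efficiency do not occur under strict monotonicity, so the $\argmax$ is well-defined; if one wants to be careful, one fixes any consistent tie-break and the argument is unaffected.) Having shown $j^{(t)}$ is minimal in $\Indices \setminus I_{(t-1)}$, I would then observe that adding a minimal element of the complement to a down-set yields a down-set: if $k \leqslant j^{(t)}$ then either $k = j^{(t)} \in I_{(t)}$, or $k < j^{(t)}$, in which case minimality of $j^{(t)}$ in the complement forces $k \in I_{(t-1)} \subset I_{(t)}$. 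Together with the down-set property of $I_{(t-1)}$, this shows every element below any element of $I_{(t)}$ lies in $I_{(t)}$, i.e. $I_{(t)} = \Down I_{(t)}$.

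The main obstacle — really the only subtle point — is justifying that the algorithm's greedy choice $\argmax_i\{\Eff_i \mid i \notin I_{(t-1)}\}$ actually coincides with the next term $j^{(t)}$ of the efficiency-sorted enumeration, i.e. that a finite recursive selection reproduces the global sort of an infinite sequence. This hinges on showing that the maximum of $\Eff$ over the infinite complement $\Indices \setminus I_{(t-1)}$ is attained (not merely a supremum), and this is exactly where monotonicity pays off: since $I_{(t-1)}$ is a finite down-set, its complement has finitely many minimal elements, and by the efficiency-dominance inequality above, the maximum of $\Eff$ over the complement is attained at one of these finitely many minimal elements. Hence the $\argmax$ is well-defined and is a minimal element, closing the induction and confirming that Algorithm~\ref{HL-alg-greedy-ks} can be run in finite time step by step, with each $I_{(t)}$ a down-set.
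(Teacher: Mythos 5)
Your proof is correct and follows essentially the same line as the paper's: both rest on the observation that strict decrease of $p$ and monotone increase of $\nu$ make the efficiency $\Eff$ strictly decreasing, so any index below the chosen one would have strictly larger efficiency, forcing the greedy choice to be a minimal element of the complement and preserving the down-set property by induction. Your added remark on attainability of the $\argmax$ at one of the finitely many minimal elements is a bit more careful than the paper's phrasing (the paper reasons about a pre-sorted enumeration) but is the same underlying mechanism.
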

\begin{proof}
If $p$ is strictly decreasing and $\nu$ is monotonically increasing,
then $\Eff$ is monotonically decreasing.
By construction, $\Eff_{j^{(t)}} \geqslant \Eff_{j^{(t+1)}},$ 
so the enumeration must have $j^{(t)} < j^{(t+1)}$. 
It follows that $j^{(0)} = 0.$

For $t>0$, since $j^{(t+1)}$ is an enumeration of $\Indices,$ no element
occurs twice, and so $j^{(t+1)}\in \Indices \setminus I_{(t)}$. 
For $s>1$, any later element $j^{(t+s)}$ in the enumeration cannot be smaller than $j^{(t+1)},$
so $j^{(t+1)}$ is a minimal element of $\Indices \setminus I_{(t)},$
that is $j^{(t+1)} \in M_{(t+1)} = M(I_{(t)}).$
Since all elements smaller than $j^{(t+1)}$ occur earlier in the enumeration,
we must have $\Down j^{(t+1)} \subset I_{(t)} \cup \{j^{(t+1)}\}.$
Therefore, if $I_{(t)}$ is a down-set, then so is $I_{(t+1)}.$
Since $I_{(0)} = \{ 0 \},$ by induction, $I_{(t+1)}$ is always a down-set. 
\qed\end{proof}

\begin{Corollary}\label{HL-corr-greedy-solves-dcks}
For each $T \geqslant 1$, the set $I_{(T)}$ generated by Algorithm~\ref{HL-alg-greedy-ks}
is the solution to Optimization Problem~\ref{HL-prob-dcks} for $P=p(I_{(T)}).$
\end{Corollary}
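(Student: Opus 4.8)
The plan is to obtain Corollary~\ref{HL-corr-greedy-solves-dcks} by chaining together the three facts already established about the greedy Algorithm~\ref{HL-alg-greedy-ks}, under the standing hypotheses that $p$ is strictly decreasing and $\nu$ is monotonically increasing. First I would invoke Lemma~\ref{HL-lemma-greedy-ks}, which says that for every $t \geqslant 1$ the set $I_{(t)}$ produced by the algorithm is a solution of the unconstrained binary knapsack Optimization Problem~\ref{HL-prob-ks} with threshold $P = p(I_{(t)})$. Then I would invoke Lemma~\ref{HL-lemma-greedy-down-set}, which uses precisely the monotonicity hypotheses to conclude that this same set satisfies the admissibility condition $I_{(t)} = \Down I_{(t)}$.

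With those two in hand, the remaining ingredient is Lemma~\ref{HL-lemma-ks-adm}: any solution of Optimization Problem~\ref{HL-prob-ks} that is also a down-set is automatically a solution of the down-set-constrained Optimization Problem~\ref{HL-prob-dcks} for the same value of $P$. Applying this with $I = I_{(t)}$ and $P = p(I_{(t)})$ yields the claim directly, so the whole argument is essentially one line: combine Lemmas~\ref{HL-lemma-greedy-ks}, \ref{HL-lemma-greedy-down-set} and~\ref{HL-lemma-ks-adm}.

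The only point that needs care — and the only place anything could go wrong — is the bookkeeping of the threshold parameter: Problems~\ref{HL-prob-ks} and~\ref{HL-prob-dcks} and the two lemmas must all be read with the same $P$, namely $P = p(I_{(t)})$, and one checks that $I_{(t)}$ meets the constraint $p(I_{(t)}) \geqslant P$ there (with equality, by the choice of $P$). There is no genuine obstacle here; the corollary is a corollary in the strict sense, since the substance has already been carried out in the preceding lemmas — the down-set induction inside Lemma~\ref{HL-lemma-greedy-down-set} and the knapsack-optimality argument inside Lemma~\ref{HL-lemma-greedy-ks}. I would close with a remark that this completes the second of the three stages announced after Theorem~\ref{HL-theorem-alg-dimadapt}, leaving only the equivalence of Algorithm~\ref{HL-alg-greedy-ks} with Algorithm~\ref{HL-alg-dimadapt} to finish the proof of that theorem.
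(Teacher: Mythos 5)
Your proposal is correct and follows the same route as the paper, which proves the corollary simply by citing Lemmas~\ref{HL-lemma-ks-adm}, \ref{HL-lemma-greedy-ks} and~\ref{HL-lemma-greedy-down-set} as you do. The extra remarks about the threshold $P = p(I_{(t)})$ and the implicit monotonicity hypotheses are accurate, but the substance of the argument is identical.
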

\begin{proof}
This is an immediate consequence of Lemmas~\ref{HL-lemma-ks-adm},~\ref{HL-lemma-greedy-ks} and~\ref{HL-lemma-greedy-down-set}.
\qed\end{proof}


In the case where $\Eff$ is strictly decreasing, we have the following result.
\begin{Lemma}\label{HL-lemma-same}
If the efficiency $r$ is strictly decreasing, then Algorithm~\ref{HL-alg-greedy-ks} produces
the same sequence of sets  $I$ as Algorithm~\ref{HL-alg-dimadapt}. 
\end{Lemma}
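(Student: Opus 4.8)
The plan is to show by induction on $t$ that the set $I$ held by Algorithm~\ref{HL-alg-dimadapt} after $t$ passes through its \texttt{while} loop coincides with the set $I_{(t)}$ produced by Algorithm~\ref{HL-alg-greedy-ks}; since both algorithms also use the same termination test $\norm{\One-q} > \epsilon$ and the same update $q := q + \Delta_j$, agreement of the index sets forces agreement of everything. The base case is immediate: both algorithms initialise $I := \{0\}$ and $q := \Delta_0$, and by Lemma~\ref{HL-lemma-greedy-down-set} we have $j^{(1)} = 0$, so $I_{(1)} = \{0\}$ as well.

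For the inductive step, suppose after $t-1$ steps both algorithms hold the same set $I = I_{(t-1)}$, which by Lemma~\ref{HL-lemma-greedy-down-set} is a down-set. I must check that the index each algorithm selects next is the same. Algorithm~\ref{HL-alg-greedy-ks} selects $j^{(t)} := \argmax_i\{\Eff_i \mid i \notin I_{(t-1)}\}$, and by Lemma~\ref{HL-lemma-greedy-down-set} this $j^{(t)}$ is in fact a minimal element of $I_{(t-1)}^C$, so $I_{(t-1)} \cup \{j^{(t)}\}$ is again a down-set. Hence $j^{(t)}$ also lies in the feasible set $\{i \mid i \notin I\ \text{and}\ I \cup \{i\}\ \text{is a down-set}\}$ over which Algorithm~\ref{HL-alg-dimadapt} maximises. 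Since that feasible set is a subset of $\{i \mid i \notin I\}$, and $j^{(t)}$ achieves the maximum of $\Eff$ over the larger set, it a fortiori achieves the maximum over the smaller one. Because $\Eff$ is strictly decreasing, the argmax is unique on any subset of $\Indices$ on which it is attained: if two distinct indices were both maximisers, neither could be below the other (strict monotonicity would break the tie), but an $\argmax$ taken over a finite set of minimal elements — and more to the point, the global maximiser of a strictly decreasing efficiency — is unique, so Algorithm~\ref{HL-alg-dimadapt} must also pick exactly $j^{(t)}$. Therefore the two algorithms append the same index, giving $I_{(t)}$ in both cases, and since $q$ and the stopping test are updated identically, the induction goes through and the sequences of sets coincide.

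The main subtlety — and the one point that needs care rather than routine bookkeeping — is the uniqueness-of-argmax argument: one must be sure that Algorithm~\ref{HL-alg-dimadapt}, which ranges only over down-set-preserving candidates, cannot be diverted to a different index than the globally efficiency-maximal $j^{(t)}$ chosen by Algorithm~\ref{HL-alg-greedy-ks}. This is exactly where strict (as opposed to merely monotone) decrease of $\Eff$ is used: it guarantees that the maximiser of $\Eff$ over $I_{(t-1)}^C$ is unique, so even though the two algorithms optimise over nominally different feasible sets, the common optimum $j^{(t)}$ is forced on both. Everything else is a direct unwinding of Lemma~\ref{HL-lemma-greedy-down-set} together with the observation that the feasible set of Algorithm~\ref{HL-alg-dimadapt} is sandwiched between $\{j^{(t)}\}$ and $\{i \mid i \notin I\}$.
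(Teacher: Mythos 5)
Your core argument is the same as the paper's: by Lemma~\ref{HL-lemma-greedy-down-set}, the index $j^{(t)}$ chosen by Algorithm~\ref{HL-alg-greedy-ks} is a minimal element of $I_{(t-1)}^C$, hence lies in the (smaller) feasible set of Algorithm~\ref{HL-alg-dimadapt}, and since it maximises $\Eff$ over the larger set it a fortiori maximises over the smaller one; the induction bookkeeping and base case are what the paper leaves tacit. However, your justification of uniqueness of the argmax is wrong. Under Definition~\ref{HL-def-monotonicity}, \emph{strictly decreasing} constrains only comparable pairs: if $i < j$ then $\Eff_i > \Eff_j$. It says nothing about incomparable indices, and two distinct minimal elements of $I_{(t-1)}^C$ are always incomparable, so nothing prevents them from having equal efficiency. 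Indeed in the paper's own setting this occurs whenever $\gamma_{d,1}=\gamma_{d,2}$: then $\Eff_{e_1}=\Eff_{e_2}$ by symmetry, and the argmax over $\Indices\setminus\{0\}$ is not a singleton. Your sentence ``the global maximiser of a strictly decreasing efficiency is unique'' is therefore false. The correct observation, which strict decrease \emph{does} give you, is the complementary one: every maximiser of $\Eff$ over $I_{(t-1)}^C$ must be a minimal element of $I_{(t-1)}^C$ (if $\ell < i$ with $\ell \in I_{(t-1)}^C$ then $\Eff_\ell > \Eff_i$, so $i$ cannot be maximal). Hence the \emph{set} of maximisers for Algorithm~\ref{HL-alg-greedy-ks} coincides exactly with the set of maximisers for Algorithm~\ref{HL-alg-dimadapt}, and the two algorithms produce the same sequence of index sets under any shared tie-breaking convention. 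The paper's proof (``the element of $M_{(t)}$ which maximizes $\Eff$'') silently uses the same assumption of a well-defined argmax, so this is a gap you have inherited rather than introduced; but your attempt to close it proves the wrong statement, and you should replace the uniqueness claim with the set-of-maximisers argument just sketched.
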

\begin{proof}
From the proof of Lemma~\ref{HL-lemma-greedy-down-set}, we have that if $\Eff$ is strictly decreasing,
then for $t>0$, $j^{(t+1)}$ as per Algorithm~\ref{HL-alg-greedy-ks}
is always the element of $M_{(t+1)}$ which maximizes $\Eff.$
This is exactly $j := \argmax_i \{\Eff_i \mid i \in M(I_{(t)})\},$
as per Algorithm~\ref{HL-alg-dimadapt}.
For both algorithms, $I_{(0)} = \{0\}.$
\qed\end{proof}

All the pieces are now in place for the main proof of this Section.

~

\begin{proofof}{Theorem~\ref{HL-theorem-alg-dimadapt}}
From Corollary~\ref{HL-corr-greedy-solves-dcks}
we see that if the efficiency $\Eff$  is strictly decreasing,
then each set $I$ produced by the greedy algorithm (Algorithm~\ref{HL-alg-greedy-ks}) is a solution of 
Optimization Problem~\ref{HL-prob-dcks}. 
From Lemma~\ref{HL-lemma-same}
we see that if the efficiency $\Eff$ is strictly decreasing, then Algorithm~\ref{HL-alg-greedy-ks} produces
the same sequence of sets $I$ as Algorithm~\ref{HL-alg-dimadapt}. 

If $p$ is strictly decreasing and $\nu$ is monotonically increasing, 
then since $\Eff_j = p_j/\nu_j$ then $\Eff$ is strictly decreasing.
Therefore each set $I$ in the sequence produced by Algorithm~\ref{HL-alg-dimadapt} is a solution of
Optimization Problem~\ref{HL-prob-dcks}. 
\end{proofof}

\section{Error bounds}
\label{HL-section-Bounds}

We will now describe a second variant of \textsc{wtp} quadrature, 
$q^{(\WW)}$ on $\Hilbert_{d,\gamma}^{(r)},$ 
based on the same set of incremental rules as that used for
the sequence of quadrature rules $q^{(\DA)}$ 
described in Section~\ref{HL-section-Problem} above, 
but where the order in which the incremental rules are added to this variant is based on
the construction of Wasil\-kowski and Wo\'znia\-kowski~\cite[Section 5]{WasW99}.
The construction of this variant uses criteria similar to those used by
Wasil\-kowski and Wo{\'z}nia\-kowski \cite[Theorem 3]{WasW99}, but adapted to our setting.
These criteria are
\begin{align}
\norm{q_0}_{\Hilbert_{1,\gamma}^{(r)}} &\leqslant 1, \quad
\norm{q_j-q_{j-1}}_{\Hilbert_{1,\gamma}^{(r)}} \leqslant \sqrt{\gamma} C D^j \quad \text{for all}\ j \geqslant 1,
\ \text{and all}\ 0 < \gamma \leqslant 1
\label{HL-eq-new-39}
\intertext{(corresponding to Wasil\-kowski and Wo{\'z}nia\-kowski \cite[(39)]{WasW99}), and}
\nu_0 &= 1, \quad
\nu_j\,D^{j \rho} \leqslant 1 \quad \text{for all}\ j \geqslant 1
\label{HL-eq-new-36}
\intertext{(corresponding to Wasil\-kowski and Wo{\'z}nia\-kowski \cite[(36)]{WasW99}),
\newline
for some $D \in (0,1)$ and some positive $C$ and $\rho$.}
\notag
\end{align}

As a consequence of~\eqref{HL-eq-new-39}, we have 
\begin{align}
\norm{\Delta_j}_{\Hilbert_{d,\gamma}^{(r)}} 
&=
\prod_{k=1}^d \norm{\delta^{(k)}_{j_k}}_{\Hilbert_{1,\gamma_{d,k}}^{(r)}}
\leqslant b(d,j), \quad \text{where} 
\notag
\\
b(d,j) &:= \prod_{k=1}^d \left( \sqrt{\gamma_{d,k}}\ C D^{j_k} \right)^{1-\partial_{0,j_k}}.
\label{HL-eq-b-d-j-def}
\end{align}
Here, and below, we use 
\begin{align*}
\partial_{a,b}
&:=
\begin{cases}
0, & a \neq b
\\
1, & a = b
\end{cases}
\end{align*} 
to denote the Kronecker delta.

Let $(\xi_{d,k}),$ $k=1,\ldots,d,$ be a sequence of positive numbers. 
In contrast to Wasil\-kowski and Wo\'znia\-kowski~\cite[Section 5]{WasW99}, 
we do not stipulate that $\xi_{d,1}=1,$ but instead, $\xi_{d,1} \geqslant \sqrt{1-D^2}.$
Define
\begin{align}
\xi(d,j) &:= \prod_{k=1}^d \xi_{d,k}^{1-\partial_{0,j_k}}.
\label{HL-eq-xi-d-j-def} 
\end{align}

We therefore have $b(d,j)/\xi(d,j) \To 0$ as $\norm{j}_1 \To \infty.$
We order the incremental rules in order of non-decreasing $b(d,j)/\xi(d,j)$ for each index $j$,
creating an order on the indices $j^{(\WW)}(h)$ .
We adjust $\xi(d,k)$ so that this order agrees with the lattice partial ordering of the indices.
We now define $I_N^{(\WW)} := \{j^{(\WW)}(1),\ldots,j^{(\WW)}(N)\},$
and define the quadrature rule
\begin{align}
q_N^{(\WW)} &:= \sum_{j \in I_N^{(\WW)}} \Delta_j.
\label{HL-def-q-N-WW}
\end{align}

To obtain a quadrature error of at most $\varepsilon \in (0,1)$, we set
\begin{align}
N(d,\varepsilon,\eta) &:= 
\left|\left\{j \mid b(d,j)/\xi(d,j) > \big( \varepsilon/C_1(d,\eta) \big)^{1/(1-\eta)}\right\}\right|,
\label{HL-eq-N-d-eps-eta-def}
\intertext{where $\eta \in (0,1)$ and}
C_1(d,\eta) 
&:= 
\sqrt{\frac{\xi_{d,1}^{2(1-\eta)}}{1-D^2} \prod_{k=2}^d \left(1 + (C^2 \gamma_{d,k})^{\eta}\ \xi_{d,k}^{2(1-\eta)} \frac{D^{2\eta}}{1-D^{2\eta}}\right)}.
\label{HL-eq-C-1-d-eta-def}
\end{align}
Finally, we define
\begin{align}
q^{(\WW)}_{d,\varepsilon,\eta} &:= 
\sum_{j \in I_{N(d,\varepsilon,\eta)}}^{(\WW)} \Delta_j.
\label{HL-eq-new-45}
\end{align}

We can now present our version of Wasil\-kowski and Wo\'znia\-kowski's main theorem on 
the error and cost of \textsc{wtp} quadrature ~\cite[Theorem~3]{WasW99}.

\begin{Theorem}
\label{HL-theorem-new-3}
Let $\eta \in (0,1).$ 
Assume that a sequence of quadrature points $x_1,x_2,\ldots \in \Sphere^2,$
and a sequence of positive integers $1=n_0 < n_1 < \ldots$ are given
such that the corresponding optimal weight quadrature rules $q_j := q_j^1 \in \Hilbert_{1,1}^{(r)}$
satisfy~\eqref{HL-eq-new-39} and~\eqref{HL-eq-new-36}
for some $D \in (0,1)$ and some positive $C$ and $\rho$.
Then the quadrature rule $q^{(\WW)}_{d,\varepsilon,\eta}$ defined by~\eqref{HL-eq-new-45} has 
worst-case quadrature error $e(q^{(\WW)}_{d,\varepsilon,\eta}) \leqslant \varepsilon,$
and its cost (in number of quadrature points) is bounded by
\begin{align}
\operatorname{cost}&(q^{(\WW)}_{d,\varepsilon,\eta}) \leqslant C(d,\varepsilon,\eta)\left(\frac{1}{\varepsilon}\right)^{\rho/(1-\eta)},
\label{HL-eq-WW-cost-bound}
\end{align}
where
\begin{align}
C(d,\varepsilon,\eta) 
&:=
\frac{
 \operatorname{max}(\sqrt{\gamma_{d,1}}, \xi_{d,1})^{\rho}
 {\displaystyle \prod_{k=2}^d}
 \left(
  1 +
  \frac{\displaystyle C^{\rho} \gamma_{d,k}^{\rho/2}}{\displaystyle \xi_{d,k}^{\rho}}\ g(k,\varepsilon,\eta)
 \right)
 f_{k,\eta}^{\rho}
}{
 (1 - D^{\rho})(1-D^2)^{\rho/(2-2\eta)}
}
\label{HL-eq-C-d-def}
\\
f_{k,\eta}
&:= 
\left( 
 1+C^{2\eta}\gamma_{d,k}^{\eta}\xi_{d,k}^{2(1-\eta)}\frac{D^{2\eta}}{1-D^{2\eta}} 
\right)^{1/(2(1-\eta))},
\label{HL-eq-f-def}
\end{align}
\begin{align}
g(k,\varepsilon,\eta) &:= 
\left\lfloor
 \frac{ \log\left( 
  \frac{\displaystyle C \gamma_{d,k}^{1/2}}{\displaystyle (1-D^2)^{1/(2-2\eta)}}\ 
  \frac{\displaystyle \xi_{d,1}}{\displaystyle \xi_{d,k}}
  \left( 
   \prod_{i=2}^k f_{i,\eta} 
  \right) 
  \left(\frac{\displaystyle 1}{\displaystyle \varepsilon}\right)^{1/(1-\eta)}
 \right) }{\log \big(D^{-1}\big)} 
\right\rfloor_{+}.
\label{HL-eq-g-def}
\end{align}
By $\lfloor x \rfloor_{+}$, we mean $\max(0,x).$
\end{Theorem}

Wasil\-kowski and Wo\'znia\-kowski's proof, with $s:=2, \alpha:=1,$ 
applies directly to our Theorem~\ref{HL-theorem-new-3}, once the change in $\xi_{d,1}$ is taken into account.
For details of the proof, see Appendix A.

Corollary 1 of Wasil\-kowski and Wo\'znia\-kowski \cite[p. 434]{WasW99} presents a simpler bound for
the cost of their \textsc{wtp} algorithm, and their simplification also applies here.
\begin{Corollary}\label{HL-cor-1}
For every positive $\delta$ there exists a positive $c(d,\delta)$ such that the cost of
 the quadrature rule $q^{(\WW)}_{d,\varepsilon,\eta}$ defined by~\eqref{HL-eq-new-45} is bounded by
\begin{align*}
\operatorname{cost}(q^{(\WW)}_{d,\varepsilon,\eta}) &\leqslant c(d,\delta)\left(\frac{1}{\varepsilon}\right)^{\rho+\delta}.
\end{align*}
\end{Corollary}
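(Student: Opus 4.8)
The plan is to follow the strategy of Corollary~1 of Wasilkowski and Woźniakowski~\cite[p.~434]{WasW99}, treating the bound of Theorem~\ref{HL-theorem-new-3} as a black box and absorbing all $\epsilon$-dependent factors that grow more slowly than any positive power of $1/\epsilon$ into the constant $c(d,\delta)$. Given $\delta > 0$, I would first choose $\eta \in (0,1)$ small enough that $\rho/(1-\eta) \leqslant \rho + \delta/2$; this is possible since $\rho/(1-\eta) \to \rho$ as $\eta \to 0^{+}$. With this fixed $\eta$, Theorem~\ref{HL-theorem-new-3} gives
\begin{align*}
\operatorname{cost}(q^{(\WW)}_{\epsilon,d}) &\leqslant C(d,\epsilon)\left(\frac{1}{\epsilon}\right)^{\rho/(1-\eta)} \leqslant C(d,\epsilon)\left(\frac{1}{\epsilon}\right)^{\rho+\delta/2},
\end{align*}
valid for all $\epsilon \in (0,1)$, and it remains to control the prefactor $C(d,\epsilon)$.

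The only $\epsilon$-dependence in $C(d,\epsilon)$ enters through the quantities $g(k,\epsilon)$, $k=2,\ldots,d$, since $f(i,\epsilon)$ is in fact independent of $\epsilon$ (it depends only on $d$, the weights, $C$, $D$, $\eta$), and the denominator is a positive constant. Inspecting the definition of $g(k,\epsilon)$, the argument of the outer logarithm is a fixed positive constant times $\epsilon^{-1/(1-\eta)}$, so
\begin{align*}
g(k,\epsilon) &\leqslant a_k + b_k \log(1/\epsilon)
\end{align*}
for suitable constants $a_k, b_k > 0$ depending on $d$, the weights, $C$, $D$, $\eta$ but not on $\epsilon$. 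Therefore the factor $\prod_{k=2}^d \big(1+ C^\rho\gamma_{d,k}^{\rho/2}/\xi_{d,k}^\rho\, g(k,\epsilon)\big)^\rho$ is bounded above by a polynomial in $\log(1/\epsilon)$ whose degree and coefficients depend only on $d$. Consequently $C(d,\epsilon) \leqslant \tilde{c}(d)\,\big(1 + \log(1/\epsilon)\big)^{m}$ for some $\tilde{c}(d) > 0$ and integer $m = m(d)$.

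The final step is to absorb this logarithmic factor into a power $\epsilon^{-\delta/2}$. Since $(1+\log(1/\epsilon))^{m} / \epsilon^{-\delta/2} = (1+\log(1/\epsilon))^{m}\,\epsilon^{\delta/2} \to 0$ as $\epsilon \to 0^{+}$ and the expression is continuous and bounded on any interval $[\epsilon_0, 1)$, there is a constant $c'(d,\delta)$ with $(1+\log(1/\epsilon))^{m} \leqslant c'(d,\delta)\,\epsilon^{-\delta/2}$ for all $\epsilon \in (0,1)$. Combining the three estimates gives
\begin{align*}
\operatorname{cost}(q^{(\WW)}_{\epsilon,d}) &\leqslant \tilde{c}(d)\, c'(d,\delta) \left(\frac{1}{\epsilon}\right)^{\rho+\delta},
\end{align*}
so setting $c(d,\delta) := \tilde{c}(d)\, c'(d,\delta)$ completes the proof. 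I expect no serious obstacle here; the only point requiring minor care is verifying that $f(i,\epsilon)$ and the denominator in $C(d,\epsilon)$ genuinely carry no $\epsilon$-dependence, so that the entire $\epsilon$-dependence of the prefactor is the polylogarithmic growth coming from the floor functions $g(k,\epsilon)$.
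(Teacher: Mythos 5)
Your proposal is correct and matches the approach the paper implicitly relies on: the paper offers no explicit argument, instead citing Corollary~1 of Wasilkowski and Wo\'znia\-kowski~\cite{WasW99}, whose proof proceeds exactly as you describe (fix $\eta$ small, observe that the $\epsilon$-dependence of the prefactor is polylogarithmic via the $g(k,\epsilon)$ terms, and absorb the polylogarithm into an extra $\epsilon^{-\delta/2}$). Your write-out supplies the details the paper leaves to the reference, and your observation that $f(i,\epsilon)$ and the denominator are in fact $\epsilon$-independent is the right key check.
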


Following Theorem 4 of Wasil\-kowski and Wo\'znia\-kowski~\cite{WasW99} we now examine the dependence
of our cost bound~\eqref{HL-eq-WW-cost-bound} on the dimension $d$.
If $\xi_{d,k}=c$ for all $k\geqslant 2$, for some positive constant $c$,
then for exponentially decreasing dimension weights $\gamma_{d,k} = g^k$ with $g < 1,$
we have
\begin{align*}
\sup_d \left\{ 
\sum_{k=2}^d \left( \frac{\gamma_{d,k}^{1/2}}{\xi_{d,k}} \right)^{\rho},\,
\sum_{k=2}^d \left( \xi_{d,k}^s \frac{\gamma_{d,k}^{1/2}}{\xi_{d,k}} \right)^{s\eta} \right\}
&=
\max \left\{ 
c^{-\rho} \frac{g^{\rho}}{1-g^{\rho/2}},\,
c^{s-s\eta} \frac{g^{s\eta}}{1-g^{s\eta/2}} \right\}
\\
&< \infty. 
\end{align*}

Examining Theorem 4 of of Wasil\-kowski and Wo\'znia\-kowski~\cite{WasW99}
and its proof in some detail, 
we see that a similar argument also applies to the $q^{(\WW)}$ rules.
This implies the following.
\begin{Theorem}\label{HL-theorem-new-4}
If $\xi_{d,k}=c$ for all $k\geqslant 2$, for some positive constant $c$,
then for exponentially decreasing dimension weights $\gamma_{d,k} = g^k$ with $g < 1,$
the algorithm giving the quadrature rule $q^{(\WW)}_{d,\varepsilon,\eta}$ defined by~\eqref{HL-eq-new-45}
is strongly polynomial, and in particular,
for each positive $\delta$ there exists $c_{\delta}$ such that
\begin{align*}
\operatorname{cost}(q^{(\WW)}_{d,\varepsilon,\eta}) 
&\leqslant c_{\delta}\left(\frac{1}{\varepsilon}\right)^{\rho/(1-\eta)+\delta},
\end{align*}
for all $\varepsilon \in (0,1)$ and all $d \geqslant 1.$
\end{Theorem}

The sequence of rules $q^{(\DA)}$ is at least as efficient as $q^{(\WW)},$
in the sense that $q^{(\DA)}$ is based on the optimal solution of 
the corresponding down-set-con\-strained continuous knapsack problem,
as explained in Section~\ref{HL-section-Optimality}.
As a direct consequence of Theorem~\ref{HL-theorem-new-3} and Corollary~\ref{HL-cor-1},
we therefore have the following result.
\begin{Theorem}
\label{HL-theorem-da-bounds}
Let $\eta \in (0,1).$
Assume that a sequence of quadrature points $x_1,x_2,\ldots \in \Sphere^2,$
and a sequence of positive integers $n_0 < n_1 < \ldots$ are given
such that the corresponding optimal weight quadrature rules $q_j := q_j^1 \in \Hilbert_{1,1}^{(r)}$
satisfy~\eqref{HL-eq-new-39} and~\eqref{HL-eq-new-36}
for some $D \in (0,1)$ and some positive $C$ and $\rho$.
Let $I_{(t)}, q_{(t)}^{(\DA)}$ be the index set and corresponding quadrature rule
generated by iteration $t$ of Algorithm~\ref{HL-alg-dimadapt}, based on the rules $q_j,$
for sufficiently small error $\varepsilon=\varepsilon_0$.

Then the quadrature rule $q_{(t)}^{(\DA)}$ has 
worst-case quadrature error $e(q_{(t)}^{(\DA)}) = \varepsilon_{(t)} := \sqrt{1-p(I_{(t)})},$
and its cost $\nu(I_{(t)})$ is bounded by
\begin{align*}
\nu(I_{(t)}) \leqslant C(d,\varepsilon_{(t)},\eta)\left(\frac{1}{\varepsilon_{(t)}}\right)^{\rho/(1-\eta)},
\end{align*}
where $C(d,\varepsilon_{(t)},\eta)$ is defined as per Theorem~\ref{HL-theorem-new-3}.
As a consequence,
for every positive $\delta$ there exists a positive $c(d,\delta)$ such that the cost of
 the quadrature rule $q_{(t)}^{(\DA)}$ is bounded by
\begin{align*}
\nu(I_{(t)}) &\leqslant c(d,\delta)\left(\frac{1}{\varepsilon_{(t)}}\right)^{\rho+\delta}.
\end{align*}

\end{Theorem}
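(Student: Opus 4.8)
The plan is to read off both assertions from the optimality of the DA index set $I_{(t)}$ together with Theorem~\ref{HL-theorem-new-3} and Corollary~\ref{HL-cor-1}, as the discussion preceding the statement indicates. The error identity is immediate: Algorithm~\ref{HL-alg-dimadapt} adds an index $j$ only when $I\cup\{j\}$ stays a down-set, so $I_{(t)} = \Down I_{(t)}$ and $q_{(t)}^{(\DA)} = \sum_{j\in I_{(t)}}\Delta_j = q_{I_{(t)}}^*$ is the optimal rule in $V_{I_{(t)}}$. Since the $\Delta_j$ are pairwise orthogonal with $\sum_j \Delta_j = \One$ and $\norm{\One}_{\Hilbert_{d,\gamma}^{(r)}}^2 = p(\Indices) = 1$, one gets $e(q_{(t)}^{(\DA)})^2 = \norm{\One - q_{(t)}^{(\DA)}}^2 = \sum_{j\notin I_{(t)}} p_j = 1 - p(I_{(t)})$, that is, $e(q_{(t)}^{(\DA)}) = \epsilon_{(t)}$.

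For the cost bound I would compare with the WW rule at the same accuracy. Under the hypotheses the efficiency $\Eff$ is strictly decreasing, so by Theorem~\ref{HL-theorem-alg-dimadapt} (through Corollary~\ref{HL-corr-greedy-solves-dcks}) the set $I_{(t)}$ solves Optimization Problem~\ref{HL-prob-dcks} with $P := p(I_{(t)})$; in particular $\nu(I_{(t)}) \leqslant \nu(J)$ for every down-set $J$ with $p(J) \geqslant P$. Apply Theorem~\ref{HL-theorem-new-3} with target error $\epsilon := \epsilon_{(t)}$: it yields $q^{(\WW)}_{\epsilon_{(t)},d} = \sum_{j\in J}\Delta_j$ with $J := I_{N(\epsilon_{(t)},d)}^{(\WW)}$, and $J$ is a down-set because the enumeration $j^{(\WW)}$ was arranged, by adjusting the $\xi_{d,k}$, to refine the lattice order. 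The same orthogonality computation as above turns $e(q^{(\WW)}_{\epsilon_{(t)},d}) \leqslant \epsilon_{(t)}$ into $p(J) = 1 - e(q^{(\WW)}_{\epsilon_{(t)},d})^2 \geqslant 1-\epsilon_{(t)}^2 = P$, so $J$ is a feasible competitor in Optimization Problem~\ref{HL-prob-dcks}. Optimality of $I_{(t)}$ then gives $\nu(I_{(t)}) \leqslant \nu(J) = \operatorname{cost}(q^{(\WW)}_{\epsilon_{(t)},d}) \leqslant C(d,\epsilon_{(t)})(1/\epsilon_{(t)})^{\rho/(1-\eta)}$, which is the first bound; the second follows by running Corollary~\ref{HL-cor-1} through the same comparison, or directly from the first bound by picking $\eta$ with $\rho/(1-\eta) < \rho+\delta$ and absorbing the polylogarithmic growth of $\epsilon\mapsto C(d,\epsilon)$ into $c(d,\delta)(1/\epsilon_{(t)})^{\delta'}$ for small $\delta' > 0$, exactly as in Wasil\-kowski and Wo\'znia\-kowski's Corollary~1.

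The two facts that need real verification, rather than a one-line appeal, are (i) that $I_{(t)}$ genuinely optimises Optimization Problem~\ref{HL-prob-dcks}, which means checking that the hypotheses of Theorem~\ref{HL-theorem-alg-dimadapt}, namely $p$ strictly decreasing and $\nu$ monotonically increasing, do hold under \eqref{HL-eq-new-39} and~\eqref{HL-eq-new-36}, and it is here that the assumption that $\epsilon_0$ be sufficiently small is used; and (ii) that the WW index set $J$ is itself a down-set, so that it is an admissible competitor. I expect (i) to be the main obstacle; once (i) and (ii) are in hand, the identity $e(\cdot)^2 = 1 - p(\cdot)$ converts the WW error bound into the constraint $p(J) \geqslant p(I_{(t)})$, and the cost bounds fall out of the optimality of $I_{(t)}$.
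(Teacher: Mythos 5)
Your proposal follows the same route the paper indicates but leaves implicit: by Theorem~\ref{HL-theorem-alg-dimadapt} (through Corollary~\ref{HL-corr-greedy-solves-dcks}) the index set $I_{(t)}$ produced by the \textsc{da} algorithm is an exact minimiser of Optimization Problem~\ref{HL-prob-dcks} for $P=p(I_{(t)})$; the \textsc{ww} index set $J = I^{(\WW)}_{N(\epsilon_{(t)},d)}$ is a feasible competitor (a down-set, by the adjustment of the $\xi_{d,k}$, with $p(J)\geqslant P$ by orthogonality); hence $\nu(I_{(t)})\leqslant\nu(J)$ and the cost bounds of Theorem~\ref{HL-theorem-new-3} and Corollary~\ref{HL-cor-1} carry over. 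The paper presents the result as a ``direct consequence'' with exactly this comparison in mind, so your argument is a faithful unpacking of what the paper leaves to the reader, and the orthogonality computation giving $e^2 = 1-p(\cdot)$ is correct.

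The one place I would push back is your diagnosis of where the ``sufficiently small $\epsilon_0$'' clause does work. You tie it to verifying that $p$ is strictly decreasing and $\nu$ monotonically increasing, but these are global structural properties of the lattice that do not become truer for small $\epsilon_0$; restricting the target error cannot repair a failure of monotonicity at some fixed pair of indices. You are right that the hypotheses of Theorem~\ref{HL-theorem-alg-dimadapt} are needed for step (i) and are not explicitly restated in Theorem~\ref{HL-theorem-da-bounds} --- this is a genuine looseness in the paper itself, not something you introduced --- but the $\epsilon_0$ clause is better read as ensuring the algorithm is actually run long enough to generate the sets $I_{(t)}$ under discussion, not as a device to make $\Eff$ strictly decreasing. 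Apart from that attribution, your proof is correct and takes essentially the paper's approach.
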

\section{Numerical results}
\label{HL-section-Results}

With the estimates given by our analysis in hand, we now  compare these to 
the results of a selection of numerical examples.

Since the underlying domain $\Domain$ is $\Sphere^2$ rather than $\Sphere^1,$
we need to change some of the details of the algorithm in comparison to the algorithm used for the torus \cite{HegL11}.
Specifically, we need a sequence of rules on a single sphere,
which yields ``good enough'' worst case quadrature error with optimal weights.
The numerical examples of this Section use a sequence of point sets
consisting of unions of spherical designs with increasing numbers of points
and non-decreasing strengths.

For the unit sphere $\Sphere^2,$ 
a spherical design \cite{DelGS77} of strength $t$ and cardinality $m$ is a set of $m$ points
$X=\{x_1,\ldots,x_m\} \subset \Sphere^2$ such that the equal weight quadrature rule
\begin{align*}
Q_X(p) &:= \frac{1}{m} \sum_{h=1}^m p(x_h) 
\end{align*}
is exact for all spherical polynomials $p$ of total degree at most $t$.

For the numerical examples of this Section, a combination of (approximate) extremal (\emph{E}) 
and low cardinality (\emph{L}) spherical designs are used, 
according to Table~\ref{HL-table-DA-designs}.
These approximate spherical designs were all provided by Womersley \cite{CheW06,Wom09}.

\begin{table}[!ht]
\begin{center}
\begin{align*}
\begin{array}{|c|c|c|c|c|c|c|c|c|c|c|c|c|}
\hline
\text{Index}\ j       & 0 & 1 & 2 & 3 &  4 &  5 &  6 &   7 &   8 &   9 &   10 &   11 \\
\text{Type}           & L & L & E & L &  E &  L &  E &   L &   E &   L &    E &    L \\
\text{Strength}\ t      & 0 & 1 & 1 & 3 &  3 &  7 &  7 &  15 &  15 &  31 &   31 &   63 \\
\text{Cardinality}\ m & 1 & 2 & 4 & 8 & 16 & 32 & 64 & 129 & 256 & 513 & 1024 & 2049 \\
\hline
\end{array}
\end{align*}
\end{center}
\caption{Strength and cardinality of approximate spherical designs used with 
Algorithm~\ref{HL-alg-dimadapt} in the numerical examples.}
\label{HL-table-DA-designs}
\end{table}

If we let $m_j := |X_j|,$ the cardinality of $X_j,$ and let $t_j$ be 
the strength of $X_j$,
then the extremal spherical designs listed in Table~\ref{HL-table-DA-designs} have $m_j=(t_j+1)^2,$
and the low cardinality spherical designs have $m_j=(t_j+1)^2/2$ or $m_j=(t_j+1)^2/2 + 1$,
and in all cases $t_j \geqslant \sqrt{m_j}-1.$
It is not yet known if a sequence of spherical designs satisfying this lower bound on strength
can be extended indefinitely, 
but there is rigorous computational proof for $t_j$ up to 100 \cite{CheFL11}.
Also, it is now known that an infinite sequence of spherical designs exists 
with the required asymptotic order of strength,
that is, there is a sequence of spherical designs of 
cardinality $m_j = \mathcal O(t_j^2),$ \cite{BonRV13}
but the proof is not constructive and the corresponding implied constant is still unknown. 

One difference between the constructions for $\Sphere^1$ and $\Sphere^2$ 
is that the nesting of spherical designs is not efficient.
The union of two spherical designs of strengths $t_1$ and $t_2$ is in general, 
a spherical design whose strength
is the \emph{minimum} of $t_1$ and $t_2$.
For the approximate spherical designs listed in Table~\ref{HL-table-DA-designs},
the first design of strength 0 is a single point. 
The next design of strength 1 consists of two antipodal points, so nesting is efficient in this case.
After this, the resulting unions of spherical designs, in general, have strength no greater than 1.
This is one motivation for using optimal weight rules rather than equal weight rules on
a single sphere. 
In the optimal weight case, adding points to a rule does not increase its worst case error.

We now turn to estimates for rules on a single sphere, in order to use them with
Theorem~\ref{HL-theorem-new-3}.
On a single unit sphere, the \textsc{da} sparse grid quadrature rule whose worst case error is to be estimated 
is an optimal weight rule 
$q_j = q_r^{\gamma}(S_j),$ 
based on the increasing union of the approximate spherical designs from Table~\ref{HL-table-DA-designs},
$S_j := \bigcup_{i=0}^j X_i$.
Its worst case error is therefore no larger than that of the 
optimal weight rule $q_r^{\gamma}(X_j)$ based on $X_j,$
the largest spherical design contained in the union,
which is, in turn no greater than the worst case error of the 
equal weight rule $q_r^{\gamma,(\QMC)}(X_j)$ based on $X_j,$
with weights $1/|X_j|$,
\begin{align*}
e^2(q_j) 
&<  
e^2\big(q_r^{\gamma}(X_j)\big) \leqslant  
e^2\big(q_r^{\gamma,(\QMC)}(X_j)\big).
\end{align*}

According to Hesse, Kuo and Sloan \cite[Theorem 4]{HesKS07}, for $\gamma=1$, we have the bound
\begin{align*}
e^2\big(q_r^{1,(\QMC)}(X_j)\big) 
&\leqslant c t_j^{-2 r},
\intertext{and therefore}
e^2\big(q_r^1(X_j)\big) 
&\leqslant c (\sqrt{m_j}-1)^{-2 r} 
\leqslant C_1 m_j^{-r},
\end{align*}
for some $C_1>0.$
For general $\gamma,$ as per Kuo and Sloan \cite{KuoS05}, we have
\begin{align}
e^2\big(q_r^{\gamma,(\QMC)}(X_j)\big)
&= 
-1 + \frac{1}{m_j^2} \sum_{h=1}^{m_j} \sum_{i=1}^{m_j} \kernel_{1,\gamma}^{(r)}(x_{j,h},x_{j,i})
\notag
\\
&=
\gamma \frac{1}{m_j^2} \sum_{h=1}^{m_j} \sum_{i=1}^{m_j} A_r(x_{j,h} \cdot x_{j,i})
\leqslant \gamma C_1 m_j^{-r}.
\label{HL-eq-pre-new-39}
\end{align}
For the sequence of spherical designs given by Table~\ref{HL-table-DA-designs}, we also have
\begin{align}
2^j &\leqslant m_j \leqslant 2^j+1.
\notag
\intertext{We therefore have}
e^2(q_j) &\leqslant \gamma C_2\ 2^{-r j},
\label{HL-eq-e2-ineq}
\end{align}
for some $C_2>0.$
Recall that 
\begin{align*}
e^2(q_j) &= 
1 - \norm{q_j}_{\Hilbert_{1,\gamma}^{(r)}}^2 
\\
&=
1  - \norm{q_{j-1}}_{\Hilbert_{1,\gamma}^{(r)}}^2 - \norm{q_j-q_{j-1}}_{\Hilbert_{1,\gamma}^{(r)}}^2
\\
&=
e^2(q_{j-1}) - \norm{q_j-q_{j-1}}_{\Hilbert_{1,\gamma}^{(r)}}^2,
\end{align*}
for $j \geqslant 1$,
since $q_j-q_{j-1}$ is orthogonal to $q_{j-1}$.
Therefore
\begin{align*}
e^2(q_{j-1}) &= e^2(q_j) + \norm{q_j-q_{j-1}}_{\Hilbert_{1,\gamma}^{(r)}}^2.
\end{align*}
Since $e^2(q_j) \geqslant 0$, using~\eqref{HL-eq-e2-ineq} we obtain
$\norm{q_j-q_{j-1}}_{\Hilbert_{1,\gamma}^{(r)}}^2 \leqslant \gamma C_2\ 2^{-r (j-1)}.$
This, in turn implies that
\begin{align*}
\norm{q_j-q_{j-1}}_{\Hilbert_{1,\gamma}^{(r)}}^2 & \leqslant \gamma C_3\ 2^{-r j},
\end{align*}
where $C_3 = 2^{r} C_2.$

All the approximate spherical designs listed in Table~\ref{HL-table-DA-designs}
have one point in common, the ``north pole'' $(0,0,1).$
Therefore the number of points $n_j = |S_j|$ of the \textsc{da} quadrature rule 
with this sequence of point sets satisfies 
$n_j = 1 + \sum_{i=0}^j (m_i - 1) = -j + \sum_{i=0}^j m_i.$
Since $2^i \leqslant m_i \leq 2^i+1,$ we have $2^{j+1}-j-1 \leqslant n_j \leqslant 2^{j+1},$
and so $m_j \leqslant n_j \leqslant m_{j+1},$ for $j \geqslant 0.$
In view of~\eqref{HL-eq-pre-new-39}, and the preceding argument,
criteria~\eqref{HL-eq-new-39} and~\eqref{HL-eq-new-36} 
hold with $D=2^{-r/2},$ $C=\sqrt{C_3}$ and $\rho=2/r.$

Consider now that the spherical design for $j=0$ consists of only one point, the north pole,
and the spherical design for $j=1$ consists of two points, the north pole and its antipode,
the ``south pole'' $(0,0,-1).$
From \eqref{HL-eq-pre-new-39} we have
\begin{align*}
e^2\big(q_r^{\gamma,(\QMC)}(X_0)\big)
&=
\gamma A_r(1),~\text{and}
\\
e^2\big(q_r^{\gamma,(\QMC)}(X_1)\big)
&=
\frac{\gamma}{4} \sum_{h=1}^{2} \sum_{i=1}^{2} A_r(x_{1,h} \cdot x_{1,i})
=
\frac{\gamma}{2} \big( A_r(1) + A_r(-1) \big),~\text{so that}
\\
\norm{q_j-q_{j-1}}_{\Hilbert_{1,\gamma}^{(r)}}^2
&=
e^2\big(q_r^{\gamma,(\QMC)}(X_0)\big) - e^2\big(q_r^{\gamma,(\QMC)}(X_1)\big)
\\
&=
\frac{\gamma}{2} \big( A_r(1) - A_r(-1) \big)
\leqslant \gamma C_3\ 2^{-r},
\end{align*}
yielding $C_3 \geqslant 2^{r-1} \big( A_r(1) - A_r(-1) \big).$
In other words,
\begin{align*}
C
&\geqslant
2^{(r-1)/2} \sqrt{ A_r(1) - A_r(-1) }.
\end{align*}

Note that the sequence of point sets used on a single sphere,
as listed in Table \ref{HL-table-DA-designs} does not vary with $\gamma,$
as would be the case in the original setting of Wasil\-kowski and Wo\'znia\-kowski~\cite{WasW99}.
The estimates above show that in the setting of this paper it is not necessary to vary the sequence in this way.

The numerical examples of this Section use exponentially decreasing dimension weights,
partly because this guarantees strong tractability \cite[Theorem 4]{KuoS05},
and partly because this allows comparison with the numerical examples
used by Hesse, Kuo and Sloan in the \textsc{qmc} case \cite[Section 7]{HesKS07}.
To see how the rules~$q^{(\DA)}$ and~$q^{(\WW)}$ behave as the decay of the dimension weights is varied,
the numerical examples used here use $r=3$ with $\gamma_k=g^k$, for $g=0.1$, $0.5$, and~$0.9$,

For the $q^{(\WW)}$ rules, we use the definition~\eqref{HL-def-q-N-WW}
with $\xi_{d,k}:= C D,$ where $D=2^{-r/2}$ as above, and 
\begin{align*}
C &:= 2^{(r-1)/2} \sqrt{ A_r(1) - A_r(-1) } = \sqrt{ A_3(1) - A_3(-1) }  \simeq 1.7426.
\end{align*}
To further validate these values,
we use the~$q^{(\DA)}$ rules with $r=3,$ $d=1$ and $\gamma_1=1,$ to verify numerically that
\eqref{HL-eq-new-39} holds for all of the sets $S_j$ generated from the designs in
Table~\ref{HL-table-DA-designs}.
In fact, numerically, \eqref{HL-eq-new-39} also holds for $D$ as above and
$C =\norm{q_1-q_0}_{\Hilbert_{1,1}^{(3)}} = \sqrt{e^2(q_0)-e^2(q_1)} \simeq 1.453.$
That is,
\begin{align*}
2^{r j / 2} \norm{q_j-q_{j-1}}_{\Hilbert_{1,1}^{(3)}} 
&\leqslant
\norm{q_1-q_0}_{\Hilbert_{1,1}^{(3)}},~\text{for}~j=2 \ldots 11.
\end{align*}

Each of the numerical examples described below uses 
a particular dimension~$d$, from $d=1$ to~$16$; 
a particular maximum $1$-norm for indices, typically~$20$; and a particular maximum number of
points, up to~$100\,000$.

The numerical results are potentially affected by three problems.
First, if $\gamma$~is close to zero, and the number of points is large,
then the matrix used to compute the weights becomes ill-conditioned, and the weights may become inaccurate.
In this case, a least squares solution is used to obtain a best approximation to the weights.
Second, if the current squared error is close to zero,
and the squared norm for the current index is close to machine epsilon, then severe cancellation may occur.
Third, the sequence of spherical designs used in our numerical examples is finite,
so it is quite possible that our algorithm generates an index corresponding to
a spherical design which is not included in our finite set.
In these last two cases, the calculation of the quadrature rule is terminated.

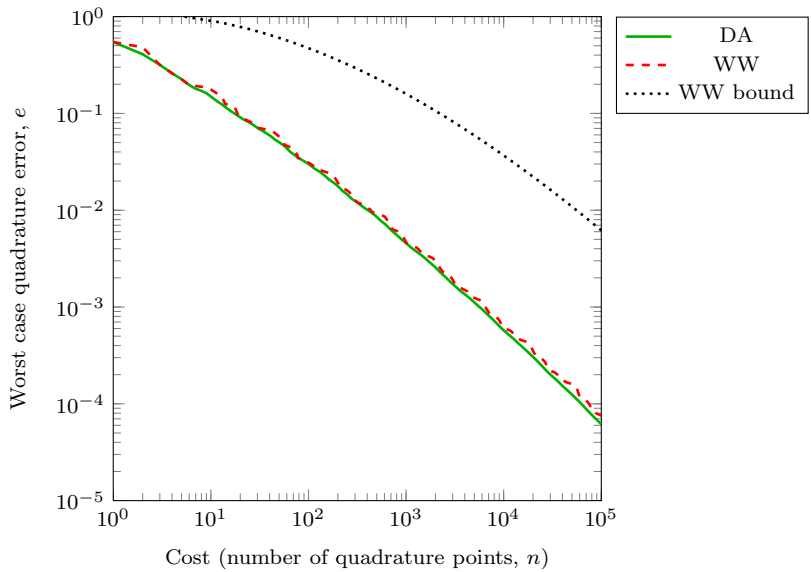
\begin{figure}[!ht]
\centering
\begin{tikzpicture}
\begin{loglogaxis}[
height=80mm,
width=80mm,
legend pos=outer north east,
xmin=1.0,
xmax=1.0e5,
ymin=1.0e-5,
ymax=1.0,
xlabel={Cost (number of quadrature points, $n$)},
ylabel={Worst case quadrature error, $e$}
]
\addplot[line width=1pt, color=green!70!black, mark=none] table[x=Cost,y=Error] {serrwtpcull-3-04-0.5-clean.dat};
\addplot[line width=1pt, color=red,   dashed,  mark=none] table[x=Cost,y=Error] {serrwtpwwcount-3-04-0.5-corrected-2.dat};
\addplot[line width=1pt, color=black, dotted,  mark=none] table[x=Cost,y=Error] {wwminlogcost-3-04-0.5-corrected.dat};
\legend{DA,WW,WW bound}
\end{loglogaxis}
\end{tikzpicture}
\caption{Error of \textsc{da} and \textsc{ww} rules vs \textsc{ww} bound for~$(\Sphere^2)^4$, $r=3$\,, $\gamma_{4,k}=0.5^k$.}
\label{HL-figure-0-5-HL-vs-WW}
\end{figure}

Figure~\ref{HL-figure-0-5-HL-vs-WW} 
is a log-log plot displaying the typical convergence behaviour of the \textsc{da} and \textsc{ww} rules
for the cases examined.
The case shown is that of~$(\Sphere^2)^4$, $r=3$\,, $\gamma_{4,k}=0.5^k$.
The number of points used varies from $n=1$ to~$100\,000$.
The cost axis is horizontal and the error axis is vertical, 
to match the figures shown in the torus paper \cite{HegL11}.
The curve in Figure~\ref{HL-figure-0-5-HL-vs-WW} labelled 
``\textsc{ww} bound'' is the minimum
of the bounds given by Theorem~\ref{HL-theorem-new-3}, 
as the parameter $\eta$ is varied over a finite number of values between 0 and 1.

In general, the \textsc{da} algorithm has a cost no greater than that of the \textsc{ww} algorithm.
Both are bounded by the \textsc{ww} bound of Theorem~\ref{HL-theorem-new-3}.
The \textsc{ww} cost bound itself has an asymptotic rate of convergence of 
$\mathcal O(\varepsilon^{-\rho}) = \mathcal O(\varepsilon^{-2/r}) = \mathcal O(\varepsilon^{-2/3})$ for all of our cases.
In other words, the asymptotic bound has quadrature error of order $\mathcal O(n^{-r/2}) = \mathcal O(n^{-3/2})$.

Judging from the slopes of the curves in Figure~\ref{HL-figure-0-5-HL-vs-WW}, 
the rates of convergence of both algorithms appear consistent with that of the bound,
but the asymptotic rate is not achieved by either algorithm or by the bound itself,
for the number of quadrature points displayed in the plot.

\begin{figure}[!ht]
\centering
\begin{tikzpicture}
\begin{loglogaxis}[
height=80mm,
width=80mm,
legend pos=outer north east,
xmin=1.0,
xmax=1.0e5,
ymin=1.0e-5,
ymax=1.0,
xlabel={Cost (number of quadrature points, $n$)},
ylabel={Worst case quadrature error, $e$}
]
\addplot[line width=1pt, color=black,          mark=+, mark size={2pt}] table[x=Cost,y=Error] {serrwtpcull-3-01-0.1-clean.dat};
\addplot[line width=1pt, color=brown!80!black, mark=x, mark size={2pt}] table[x=Cost,y=Error] {serrwtpcull-3-02-0.1-clean.dat};
\addplot[line width=1pt, color=blue,           mark=*, mark size={1pt}] table[x=Cost,y=Error] {serrwtpcull-3-04-0.1-clean.dat};
\addplot[line width=1pt, color=green!70!black, mark=none]               table[x=Cost,y=Error] {serrwtpcull-3-08-0.1-clean.dat};
\addplot[line width=1pt, color=red,    dashed, mark=none]               table[x=Cost,y=Error] {serrwtpcull-3-16-0.1-clean.dat};
\legend{$d=1$, $d=2$, $d=4$, $d=8$, $d=16$}
\end{loglogaxis}
\end{tikzpicture}
\caption{Error of \textsc{da} rules for~$(\Sphere^2)^d$, $d= 1,2,4,8,16$\,; $r=3$\,, $\gamma_{d,k}=0.1^k$.}
\label{HL-figure-0-1-1-2-4-8-16}
\end{figure}
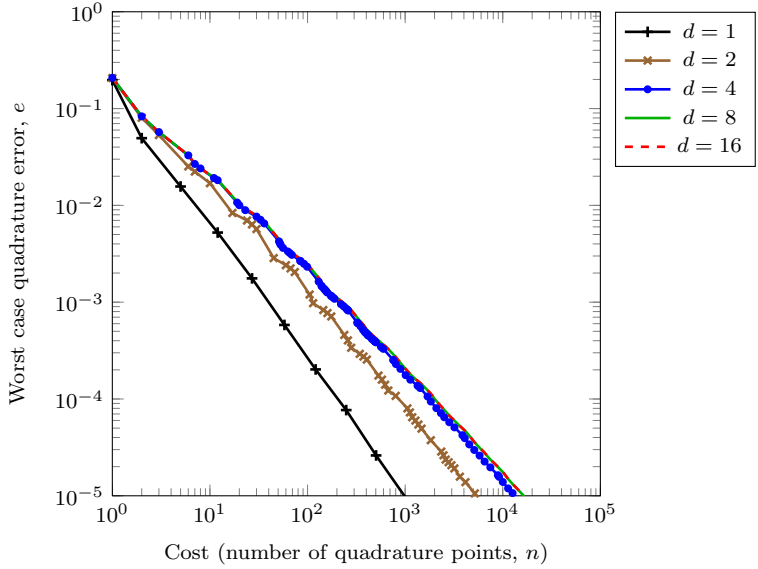

For $\gamma_{d,k}=0.1^k$, Figure~\ref{HL-figure-0-1-1-2-4-8-16} shows how the convergence rate of the error of
the \textsc{da} quadrature rules varies with dimension~$d$, for $d=1$\,, $2$, $4$, $8$, and~$16$.
The curve for $d=1$ appears consistent with the asymptotic error rate $\varepsilon=\mathcal O(N^{-3/2})$.
The cases $d=8$ and $d=16$ are almost indistinguishable on this Figure.
This is an example of the convergence in dimension.

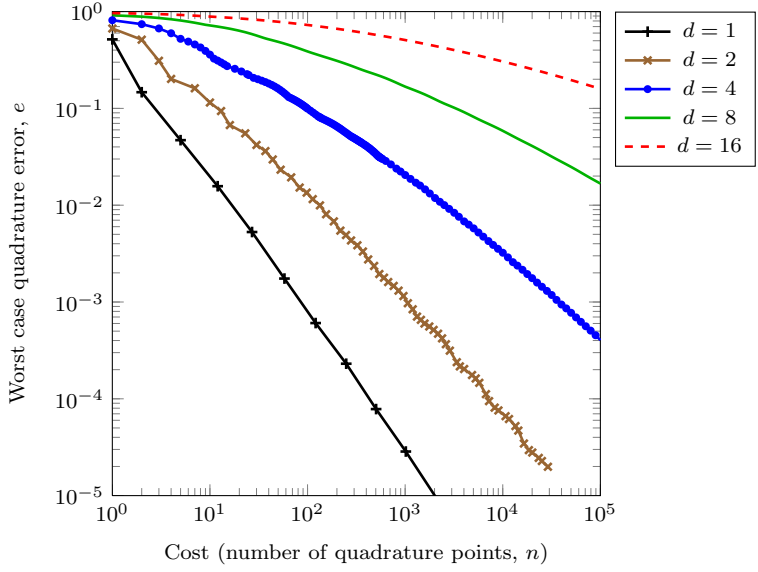
\begin{figure}[!ht]
\centering
\begin{tikzpicture}
\begin{loglogaxis}[
height=80mm,
width=80mm,
legend pos=outer north east,
xmin=1.0,
xmax=1.0e5,
ymin=1.0e-5,
ymax=1.0,
xlabel={Cost (number of quadrature points, $n$)},
ylabel={Worst case quadrature error, $e$}
]
\addplot[line width=1pt, color=black,          mark=+, mark size={2pt}] table[x=Cost,y=Error] {serrwtpcull-3-01-0.9-clean.dat};
\addplot[line width=1pt, color=brown!80!black, mark=x, mark size={2pt}] table[x=Cost,y=Error] {serrwtpcull-3-02-0.9-clean.dat};
\addplot[line width=1pt, color=blue,           mark=*, mark size={1pt}] table[x=Cost,y=Error] {serrwtpcull-3-04-0.9-clean.dat};
\addplot[line width=1pt, color=green!70!black, mark=none]               table[x=Cost,y=Error] {serrwtpcull-3-08-0.9-clean.dat};
\addplot[line width=1pt, color=red,    dashed, mark=none]               table[x=Cost,y=Error] {serrwtpcull-3-16-0.9-clean.dat};
\legend{$d=1$, $d=2$, $d=4$, $d=8$, $d=16$}
\end{loglogaxis}
\end{tikzpicture}
\caption{Error of \textsc{da} rules for~$(\Sphere^2)^d$, $d= 1,2,4,8,16$\,; $r=3$\,, $\gamma_{d,k}=0.9^k$.}
\label{HL-figure-0-9-1-2-4-8-16}
\end{figure}

Figure~\ref{HL-figure-0-9-1-2-4-8-16} shows the equivalent results for the \textsc{da} quadrature rules 
for $\gamma_{d,k}=0.9^k$.
The curve for $d=1$ again appears consistent with the asymptotic error rate $\varepsilon=\mathcal O(N^{-3/2})$,
but as $d$~increases to~$16$, the initial rate of convergence to zero of the error becomes much slower
than that for  $\gamma_{d,k}=0.1^k$.
For example, for $d=8,$ more than $1000$ quadrature points are needed to reduce the quadrature
error from 1 to 0.1, and for $d=16,$ more than $100\,000$ quadrature points are needed.
A similar phenomenon is observed for sparse grid quadrature with \textsc{rkhs} on the torus \cite{HegL11}.
\begin{appendices}
\section{Proof of Theorem \ref{HL-theorem-new-3}}
Assume that a sequence of quadrature points $x_1, x_2, \ldots \in \Sphere^2$ and 
a sequence of positive integers
$1=n_0 < n_1 < \ldots$ are given such that for all $\gamma \in (0,1]$
the corresponding optimal weight quadrature rules
\begin{align*}
q_j^{\gamma} &:= \sum_{\ell=1}^{n_j} w_{\ell}^{\gamma} k_{x_{\ell}}^{\gamma}  \in \Hilbert_{1,\gamma}^{(r)} 
\quad \text{satisfy} 
\end{align*}
\begin{align}
\norm{q_0^{\gamma}}_{\Hilbert_{1,\gamma}^{(r)}} 
&\leqslant 1, \quad
\norm{q_j^{\gamma} - q_{j-1}^{\gamma}}_{\Hilbert_{1,\gamma}^{(r)}}
\leqslant
\sqrt{\gamma} C D^j, \quad \text{and}
\label{HL-eq-proof-10}
\\  
\nu_j D^{j \rho} & \leqslant 1, \quad \text{for all~} j \geqslant 1,
\label{HL-eq-proof-11}
\end{align}
for some $D \in (0,1)$ and some positive $C$ and $\rho,$
where $\nu_0 := 1$ and $\nu_j := n_j-n_{j-1}$ for all $j \geqslant 1$.

Note that while the points $x_{\ell}$ do not change with $\gamma$,
the weights $w_{\ell}^{\gamma}$ \emph{do} change,
and so each rule $q_j^{\gamma}$ depends on $\gamma$ in general.

Define
\begin{align*}
b_j^{\gamma}
&:=
\begin{cases}
1, & j = 0,
\\
\sqrt{\gamma} C D^j, & j \geqslant 1;
\end{cases}
\quad
\delta_j^{\gamma}
:=
\begin{cases}
q_0^{\gamma}, & j = 0,
\\
q_j^{\gamma} - q_{j-1}^{\gamma}, & j \geqslant 1. 
\end{cases}
\end{align*}

Then~\eqref{HL-eq-proof-10} implies that
\begin{align*}
\norm{ \delta_j^{\gamma} }_{\Hilbert_{1,\gamma}^{(r)}}
&\leqslant
b_j^{\gamma} \quad \text{for all~} j \geqslant 0.
\end{align*}
As a consequence, and since $\norm{q_0^{\gamma}}_{\Hilbert_{1,\gamma}^{(r)}} \leqslant 1,$
it holds that
\begin{align}
\norm{ \Delta_j^{\gamma} }_{\Hilbert_{d,\gamma}^{(r)}}
&\leqslant
b(d,j),
\label{proof-11-1}
\end{align}
where
\begin{align*}
b(d,j)
&:=
\prod_{k=1}^d b_{j_k}^{\gamma_{d,k}}
=
\prod_{k=1}^d \left( \sqrt{\gamma_{d,k}}\ C D^{j_k} \right)^{1-\partial_{0,j_k}},
\end{align*}
as per~\eqref{HL-eq-b-d-j-def}.

Let $(\xi_{d,k}),$ $k=1,\ldots,d,$ be a sequence of positive numbers, 
with $\xi_{d,1} \geqslant \sqrt{1-D^2},$
and define
\begin{align*}
\xi(d,j) &:= \prod_{k=1}^d \xi_{d,k}^{1-\partial_{0,j_k}}. 
\end{align*}
as per~\eqref{HL-eq-xi-d-j-def}.
We intend to sort the incremental rules in order of decreasing $b(d,j)/\xi(d,j).$
For this order to agree with the lattice ordering,
it must be the case that $b_1^{\gamma_{d,k}}/\xi_{d,k} \leqslant 1,$
that is, $\xi_{d,k} \geqslant \sqrt{\gamma_{d,k}} C D$ for all $k.$

Let
\begin{align*}
I(d,\varepsilon,\eta) &:= 
\left\{j \relmiddle| b(d,j)/\xi(d,j) > \big( \varepsilon/C_1(d,\eta) \big)^{1/(1-\eta)} \right\},
\intertext{where $\eta \in (0,1)$ and}
C_1(d,\eta) 
&:= 
\sqrt{\frac{\xi_{d,1}^{2(1-\eta)}}{1-D^2} \prod_{k=2}^d \left(1 + (C^2 \gamma_{d,k})^{\eta}\ \xi_{d,k}^{2(1-\eta)} \frac{D^{2\eta}}{1-D^{2\eta}}\right)},
\end{align*}
as per~\eqref{HL-eq-C-1-d-eta-def}.

Let $N(d,\varepsilon,\eta) := \abs{I(d,\varepsilon,\eta)},$
and let $j^{(\WW)} = (j^{(\WW)}_{(1)},j^{(\WW)}_{(2)},\ldots)$ 
be an ordering of indices the indices in $\Indices$ by decreasing $b(d,j)/\xi(d,j).$
Then
\begin{align*}
I^{(\WW)}_{N(d,\varepsilon,\eta)}
&:=
\left\{j^{(\WW)}_{(1)},j^{(\WW)}_{(2)},\ldots,j^{(\WW)}_{(N(d,\varepsilon,\eta))}\right\}
=
I(d,\varepsilon,\eta).
\end{align*}
Define
\begin{align}
q^{(\WW)}_{d,\varepsilon,\eta}
&:=
\begin{cases}
\sum_{j \in I(d,\varepsilon,\eta)} \Delta_j, 
&\varepsilon < 1,
\\
0, 
&\varepsilon \geqslant 1,
\end{cases}
\label{HL-eq-proof-45}
\end{align}
in agreement with~\eqref{HL-eq-new-45} for $\varepsilon < 1.$

The proof now proceeds in two main steps, closely following Wasil\-kowski and Wo\'znia\-kowski's
proof of their \cite{WasW99} Theorem 3, for comparison purposes.
Firstly, we prove the error bound $e(q^{(\WW)}_{d,\varepsilon,\eta}) \leqslant \varepsilon.$
Secondly, we prove the cost bound~\eqref{HL-eq-WW-cost-bound}.

For $\varepsilon \geqslant 1,$ from~\eqref{HL-eq-e-def} and~\eqref{HL-eq-proof-45} 
we have $e(q^{(\WW)}_{d,\varepsilon,\eta})=1.$
Let 
\begin{align}
\alpha(d,\varepsilon,\eta) 
&:= \big( \varepsilon/C_1(d,\eta) \big)^{1/(1-\eta)}
\label{HL-eq-alpha-def}
\end{align}
so that
$I(d,\varepsilon,\eta) = \{j \mid b(d,j) > \xi(d,j)\ \alpha(d,\varepsilon,\eta) \}.$
From~\eqref{proof-11-1}, and using the orthogonality of the rules $\Delta_j,$ we therefore have
\begin{align*}
e^2(q^{(\WW)}_{d,\varepsilon,\eta})
&\leqslant
e^2(d,\varepsilon,\eta) :=
\begin{cases}
\sum_{b(d,j) \leqslant \xi(d,j)\ \alpha(d,\varepsilon,\eta)} b^2(d,j), 
&\varepsilon < 1,
\\
0, 
&\varepsilon \geqslant 1.
\end{cases}
\end{align*}

For $h=1,2,\ldots,d,$ we also define
\begin{align*}
\overline{e}^2(h,\varepsilon,\eta)
&:=
\begin{cases}
\sum_{\overline{b}(h,j) \leqslant \overline{\xi}(h,j)\ \overline{\alpha}(h,\varepsilon,\eta)} \overline{b}^2(h,j), 
&\varepsilon < 1,
\\
0, 
&\varepsilon \geqslant 1,
\end{cases}
\end{align*}
where
\begin{align}
\overline{b}(h,j)
&:=
\prod_{k=1}^h b_{j_k}^{\gamma_{d,k}}
=
\prod_{k=1}^h \left( \sqrt{\gamma_{d,k}}\ C D^{j_k} \right)^{1-\partial_{0,j_k}},
\notag
\\
\overline{\xi}(h,j) 
&:= 
\prod_{k=1}^h \xi_{d,k}^{1-\partial_{0,j_k}},
\quad
\overline{\alpha}(h,\varepsilon,\eta) 
:= \big( \varepsilon/\overline{C_1}(h,\eta) \big)^{1/(1-\eta)},
\label{HL-eq-ov-alpha-def}
\\
\overline{C_1}(h,\eta) 
&:= 
\sqrt{\frac{\xi_{d,1}^{2(1-\eta)}}{1-D^2} \prod_{k=2}^h \left(1 + (C^2 \gamma_{d,k})^{\eta}\ \xi_{d,k}^{2(1-\eta)} \frac{D^{2\eta}}{1-D^{2\eta}}\right)}.
\notag
\end{align}
Note that $\overline{e}(d,\varepsilon,\eta) = e(d,\varepsilon,\eta).$

We show by induction on $h$ that $\overline{e}(h,\varepsilon,\eta) \leqslant \varepsilon$ for all
$\varepsilon > 0.$
For $h=1,$ if $\varepsilon \geqslant 1,$ this is trivially true, since $\overline{e}(1,\varepsilon,\eta)=1.$
For $h=1$ and $\varepsilon < 1,$ let
\begin{align}
N^{*}_{\varepsilon}
&:=
\abs{ \{j \mid \overline{b}(1,j) > \overline{\xi}(1,j)\ \overline{\alpha}(1,\varepsilon,\eta) \} }.
\label{HL-eq-N-ast-def}
\end{align}
Since $\overline{b}(1,0) = 1,$ $\overline{\xi}(1,0) = 1,$ and
$\overline{\alpha}(1,\varepsilon,\eta) 
= \big( \varepsilon/\overline{C_1}(1,\eta) \big)^{1/(1-\eta)},$
it holds that if $\overline{C_1}(1,\eta) \geqslant 1,$ then $N^{*}_{\varepsilon} \geqslant 1.$
But
\begin{align*}
\overline{C_1}(1,\eta) 
&= 
\frac{\xi_{d,1}^{1-\eta}}{\sqrt{1-D^2}},
\end{align*}
and we have specified that $\xi_{d,1} \geqslant \sqrt{1-D^2},$ 
so it follows that $\overline{C_1}(1,\eta) \geqslant 1.$
Now
\begin{align*}
\overline{e}^2(1,\varepsilon,\eta) 
&=
\sum_{j \geqslant N^{*}_{\varepsilon}} \overline{b}^2(1,j)
=
\sum_{j \geqslant N^{*}_{\varepsilon}} \gamma_{d,1}\ C^2 D^{2 j}
\\
&=
\gamma_{d,1}\ C^2 D^{2 N^{*}_{\varepsilon}}\  \sum_{j \geqslant 0} D^{2 j}
=
\frac{\gamma_{d,1}\ C^2 D^{2 N^{*}_{\varepsilon}}}{1-D^2}
=
\frac{\overline{b}^2(1,N^{*}_{\varepsilon})}{1-D^2},
\intertext{but}
\overline{b}^2(1,N^{*}_{\varepsilon})
&\leqslant
\overline{\xi}^2(1,N^{*}_{\varepsilon})\ \overline{\alpha}^2(1,\varepsilon,\eta)
=\xi_{d,1}^2 \left( \frac{\varepsilon\, \sqrt{1-D^2}}{\xi_{d,1}^{1-\eta}} \right)^{2/(1-\eta)}
\\
&=
(\varepsilon\, \sqrt{1-D^2})^{2/(1-\eta)}
=
\varepsilon^{2/(1-\eta)}\ (1-D^2)^{1/(1-\eta)},
\intertext{so}
\overline{e}^2(1,\varepsilon,\eta) 
&=
\frac{\overline{b}^2(1,N^{*}_{\varepsilon})}{1-D^2}
\leqslant
\varepsilon^{2/(1-\eta)}\ (1-D^2)^{\eta/(1-\eta)}
\leqslant
\varepsilon^2.
\end{align*}
For the inductive step, let $1 < h \leqslant d$
and assume that $\overline{e}(h-1,\varepsilon,\eta) \leqslant \varepsilon.$
The following estimates show that $\overline{e}(h,\varepsilon,\eta) \leqslant \varepsilon.$

Noting that, from their definitions~\eqref{HL-eq-ov-alpha-def}, 
$\overline{b}(h,j) = \overline{b}(h-1,j)\ b_{j_h}^{\gamma_{d,h}}$
and $\overline{\xi}(h,j) = \overline{\xi}(h-1,j)\ \xi_{d,h}^{1-\partial_{0,j_h}},$
let
\begin{align}
\overline{\beta}_{\ell}(h,\varepsilon,\eta)
&:=
\frac{\xi_{d,h}^{1-\partial_{0,j_h}}\ \overline{\alpha}(h,\varepsilon,\eta)}{b_{\ell}^{\gamma_{d,h}}}, 
\label{HL-eq-ov-beta-def}
\end{align}
with $\overline{\alpha}$ as per~\eqref{HL-eq-ov-alpha-def}.
Since $\overline{b}(h,j) = \overline{b}(h-1,j)\ b_{\ell}^{\gamma_{d,h}}$ if and only if $\ell = j_h,$
and since 
\begin{align*}
\overline{\xi}(h,j)\ \overline{\alpha}(h,\varepsilon,\eta)
&=
\overline{\xi}(h-1,j)\ b_{\ell}^{\gamma_{d,h}}\ \overline{\beta}_{\ell}(h,\varepsilon,\eta),  
\end{align*}
we see that for $\varepsilon < 1,$
\begin{align}
\overline{e}^2(h,\varepsilon,\eta)
=
\sum_{\ell=0}^{\infty} \quad
\sum_{\overline{b}(h-1,j) \leqslant \overline{\xi}(h-1,j)\ \overline{\beta}_{\ell}(h,\varepsilon,\eta)} 
\overline{b}^2(h-1,j)\ (b_{\ell}^{\gamma_{d,h}})^2. 
\label{HL-eq-ov-e-2-ov-beta}
\end{align}

Now let
\begin{align}
a_{h,\eta}
&:=
\frac{1}{\sqrt{1 + (C^2 \gamma_{d,h})^{\eta}\ \xi_{d,h}^{2(1-\eta)} \frac{D^{2\eta}}{1-D^{2\eta}}}}.
\label{HL-eq-a-def}
\end{align}
Then $\overline{C_1}(h,\eta) = \overline{C_1}(h-1,\eta)/a_{h,\eta},$ so that
\begin{align*}
\overline{\alpha}(h,\varepsilon,\eta)
&=
\left( \frac{a_{h,\eta}\, \varepsilon}{\overline{C_1}(h-1,\eta)} \right)^{1/(1-\eta)}
=
\overline{\alpha}(h-1,a_{h,\eta}\, \varepsilon,\eta),
\end{align*}
and so
\begin{align}
\overline{\beta}_{\ell}(h,\varepsilon,\eta)
&=
\frac{\xi_{d,h}^{1-\partial_{0,j_h}}\ \overline{\alpha}(h,\varepsilon,\eta)}{b_{\ell}^{\gamma_{d,h}}}
=
\overline{\alpha}\left(h-1,
a_{h,\eta}\, \varepsilon\, \left(\frac{\xi_{d,h}^{1-\partial_{0,j_h}}}{b_{\ell}^{\gamma_{d,h}}}\right)^{1-\eta},\eta \right).
\label{HL-eq-ov-beta-ov-alpha}
\end{align}
In particular,
$\overline{\beta}_0(h,\varepsilon,\eta) = \overline{\alpha}(h-1,a_{h,\eta}\, \varepsilon,\eta),$
and therefore
\begin{align*}
\sum_{\overline{b}(h-1,j) \leqslant \overline{\xi}(h-1,j)\ \overline{\beta}_0(h,\varepsilon,\eta)} 
\overline{b}^2(h-1,j)
&=
\sum_{\overline{b}(h-1,j) \leqslant \overline{\xi}(h-1,j)\ \overline{\alpha}(h-1,a_{h,\eta}\, \varepsilon,\eta)} 
\overline{b}^2(h-1,j)
\\
&=
\overline{e}^2(h-1,a_{h,\eta}\, \varepsilon,\eta). 
\end{align*}
This implies that
\begin{align*}
\overline{e}^2(h,\varepsilon,\eta)
&=
\\
\sum_{\ell=0}^{\infty}  (b_{\ell}^{\gamma_{d,h}})^2
&\ \sum_{\overline{b}(h-1,j) \leqslant \overline{\xi}(h-1,j)
\ \overline{\alpha}\left(h-1,
a_{h,\eta}\, \varepsilon\, \left(\frac{\xi_{d,h}^{1-\partial_{0,j_h}}}{b_{\ell}^{\gamma_{d,h}}}\right)^{1-\eta},\eta \right)
} 
\overline{b}^2(h-1,j)
\\
&=
\overline{e}^2(h-1,a_{h,\eta}\, \varepsilon,\eta) +
\sum_{\ell=1}^{\infty}  (b_{\ell}^{\gamma_{d,h}})^2
\ \overline{e}^2\left(h-1,a_{h,\eta}\, \varepsilon \left(\frac{\xi_{d,h}}{b_{\ell}^{\gamma_{d,h}}}\right)^{1-\eta},\eta\right)
\\
&\leqslant
(a_{h,\eta}\, \varepsilon )^2
\ \left(1 + 
\sum_{\ell=1}^{\infty}  (b_{\ell}^{\gamma_{d,h}})^2
\left(\frac{\xi_{d,h}}{b_{\ell}^{\gamma_{d,h}}}\right)^{2(1-\eta)} \right)
\\
&=
(a_{h,\eta}\, \varepsilon)^2
\ \left(1 + 
\xi_{d,h}^{2(1-\eta)}
\sum_{\ell=1}^{\infty}  (b_{\ell}^{\gamma_{d,h}})^{2\eta} \right)
\\
&=
(a_{h,\eta}\, \varepsilon)^2
\ \left(1 + 
\xi_{d,h}^{2(1-\eta)}
\gamma_{d,h}^{\eta}\ C^{2\eta} \frac{D^{2\eta}}{1-D^{2\eta}} \right)
=
\frac{(a_{h,\eta}\, \varepsilon)^2}{a_{h,\eta}^2}
=
\varepsilon^2.
\end{align*}

We now analyze the cost of $q^{(\WW)}_{d,\varepsilon,\eta}.$
As per Definition~\ref{HL-def-nu-p},
the cost of the incremental rule $\Delta_j = \bigotimes_{k=1}^d \delta_{j_k}^{(k)}$
is $\nu_j := \prod_{k=1}^d \nu_{j_k},$
and therefore
\begin{align*}
\operatorname{cost}&(q^{(\WW)}_{d,\varepsilon,\eta})
&=
c(d,\varepsilon,\eta)
:=
\begin{cases}
\sum_{b(d,j) \leqslant \xi(d,j)\ \alpha(d,\varepsilon,\eta)} \prod_{k=1}^d \nu_{j_k}, 
&\varepsilon < 1,
\\
0, &\varepsilon \geqslant 1,
\end{cases}
\end{align*}
with $\alpha(d,\varepsilon,\eta)$ as per~\eqref{HL-eq-alpha-def}.
For $h \in \{1,\ldots,d\},$ define
\begin{align*}
\overline{c}(h,\varepsilon,\eta)
&:=
\begin{cases}
\sum_{\overline{b}(h,j) \leqslant \overline{\xi}(h,j)\ \overline{\alpha}(h,\varepsilon,\eta)} \prod_{k=1}^h \nu_{j_k}, 
&\varepsilon < 1,
\\
0, &\varepsilon \geqslant 1,
\end{cases}
\end{align*}
with $\overline{b},$ $\overline{\xi}$ and $\overline{\alpha}$ as per~\eqref{HL-eq-ov-alpha-def}.
Clearly, $\overline{c}(d,\varepsilon,\eta) = c(d,\varepsilon,\eta).$

For $h=1,$ we use $N^{*}_{\varepsilon}$ as per~\eqref{HL-eq-N-ast-def}, so for $\varepsilon < 1$ we have
\begin{align*}
\overline{c}(1,\varepsilon,\eta) 
&=
\sum_{j=0}^{N^{*}_{\varepsilon}-1} \nu_j.
\end{align*}
The condition~\eqref{HL-eq-proof-11} then results in the bound
\begin{align}
\overline{c}(1,\varepsilon,\eta)
&\leqslant
\sum_{j=0}^{N^{*}_{\varepsilon}-1} D^{-j \rho}
=
\frac{D^{-\rho N^{*}_{\varepsilon}} - 1}{D^{-\rho} - 1}
=
D^{\rho} \frac{D^{-\rho N^{*}_{\varepsilon}} - 1}{1 - D^{\rho}}
\notag
\\
&\leqslant
\frac{D^{-\rho (N^{*}_{\varepsilon} - 1)}}{1 - D^{\rho}}.
\label{HL-eq-ov-c-1-bound-1}
\end{align}
From~\eqref{HL-eq-N-ast-def}, we have
\begin{align*}
N^{*}_{\varepsilon}
&=
\abs{ \{j \mid \overline{b}(1,j) > \overline{\xi}(1,j)\ \overline{\alpha}(1,\varepsilon,\eta) \} }
\\
&=
\abs{ \{j \mid \overline{b}(1,j) > \xi_{d,1}^{1-\partial_{0,j}}\ \overline{\alpha}(1,\varepsilon,\eta) \} }.
\end{align*}
Using~\eqref{HL-eq-ov-alpha-def},
we see that $N^{*}_{\varepsilon}$ is the smallest integer $j>0$ such that
\begin{align*}
\sqrt{\gamma_{d,1}}\ C D^j
&\leqslant
\xi_{d,1}\ \overline{\alpha}(1,\varepsilon,\eta).
\end{align*}
This, in turn, implies that either $N^{*}_{\varepsilon} = 1,$ or,
using the definitions of $\overline{\alpha}$ and $\overline{C_1}$ from~\eqref{HL-eq-ov-alpha-def},
\begin{align*}
D^{N^{*}_{\varepsilon} - 1}
&>
\frac{\xi_{d,1}}{\sqrt{\gamma_{d,1}}\ C}\ \overline{\alpha}(1,\varepsilon,\eta)
=
\frac{\xi_{d,1}}{\sqrt{\gamma_{d,1}}\ C}\ \big( \varepsilon/\overline{C_1}(1,\eta) \big)^{1/(1-\eta)}
\\
&=
\frac{(1-D^2)^{1/(2-2\eta)}}{\sqrt{\gamma_{d,1}}}\ \varepsilon^{1/(1-\eta)}.
\end{align*}
If $N^{*}_{\varepsilon} = 1,$ then since $\xi_{d,1} \geqslant \sqrt{1-D^2},$
it holds that
\begin{align*}
1 = D^{N^{*}_{\varepsilon} - 1}
&>
\overline{\alpha}(1,\varepsilon,\eta)
=
\frac{(1-D^2)^{1/(2-2\eta)}}{\xi_{d,1}}\ \varepsilon^{1/(1-\eta)}.
\end{align*}

Let
\begin{align*}
C_1^{*} 
&:= 
(1-D^2)^{1/(2-2\eta)}\ \operatorname{min}\left(
 \frac{1}{\sqrt{\gamma_{d,1}}}, \frac{1}{\xi_{d,1}} 
\right),
\end{align*}
so that
$D^{N^{*}_{\varepsilon} - 1} > C_1^{*} \varepsilon^{1/(1-\eta)}.$
The bound~\eqref{HL-eq-ov-c-1-bound-1} then becomes
\begin{align*}
\overline{c}(1,\varepsilon,\eta)
&\leqslant
\frac{D^{-\rho (N^{*}_{\varepsilon} - 1)}}{1 - D^{\rho}}
<
C_1 (1/\varepsilon)^{\rho/(1-\eta)}, 
\end{align*}
where
\begin{align}
C_1 
&:= \frac{(C_1^{*})^{-\rho}}{1-D^{\rho}}
=
\frac{\operatorname{max}(\sqrt{\gamma_{d,1}}, \xi_{d,1})^{\rho}}{(1 - D^{\rho})(1-D^2)^{\rho/(2-2\eta)}}.
\label{HL-eq-C-1-def}
\end{align}

For $h \geqslant 2$ assume that $\overline{c}(h-1,\varepsilon,\eta) \leqslant C_{h-1} (\rho/\varepsilon)^{1/(1-\eta)}$
for some $C_{h-1} > 0.$

An argument similar to that used to establish~\eqref{HL-eq-ov-e-2-ov-beta}
shows that
\begin{align*}
\overline{c}(h,\varepsilon,\eta)
&=
\sum_{\ell=0}^{\infty} \nu_{\ell}
 \sum_{\overline{b}(h-1,j) \leqslant \overline{\xi}(h-1,j)\ \overline{\beta}_{\ell}(h,\varepsilon,\eta)} 
  \prod_{k=1}^{h-1} \nu_{j_k}
\end{align*}
with $\overline{b}$ and $\overline{\xi}$ as per~\eqref{HL-eq-ov-alpha-def}
and $\overline{\beta}_{\ell}$ as per~\eqref{HL-eq-ov-beta-def}.
Using~\eqref{HL-eq-a-def} and~\eqref{HL-eq-ov-beta-ov-alpha}, we see that
\begin{align}
\overline{c}(h,\varepsilon,\eta)
&=
\overline{c}(h-1,a_{h,\eta}\,\varepsilon,\eta) +
\sum_{\ell=1}^{\overline{g}(h,\varepsilon,\eta)} 
\nu_{\ell}\ \overline{c}\left(
h-1,a_{h,\eta}\,\varepsilon\, \left(\frac{\xi_{d,h}}{C\sqrt{\gamma_{d,h}}\,D^{\ell}}\right)^{1-\eta},\eta
\right),
\label{HL-eq-ov-c-h-1}
\end{align}
where $\overline{g}(h,\varepsilon,\eta)$ is the largest integer $j$ such that
\begin{align*}
C\sqrt{\gamma_{d,h}}\,D^j
&>
\xi_{d,h}\ \overline{\alpha}(h,\varepsilon,\eta),
\end{align*}
and therefore
\begin{align*}
\overline{g}(h,\varepsilon,\eta) 
&= 
\left\lfloor
 \frac{ \log\left( 
  C \sqrt{\gamma_{d,h}}/\big(\xi_{d,h}\ \overline{\alpha}(h,\varepsilon,\eta)\big) 
 \right) }{\log (D^{-1})} 
\right\rfloor_{+}.
\end{align*}
From~\eqref{HL-eq-ov-alpha-def} we know that
\begin{align*}
1/\overline{\alpha}(h,\varepsilon,\eta) 
&= \overline{C_1}(h,\eta)^{1/(1-\eta)} \varepsilon^{-1/(1-\eta)}.
\end{align*}
From~\eqref{HL-eq-ov-alpha-def} and~\eqref{HL-eq-a-def}
it also follows that
\begin{align*}
\overline{C_1}(h,\eta)
&=
\frac{\xi_{d,1}^{1-\eta}}{\sqrt{1-D^2}\prod_{k=2}^h a_{k,\eta}}.
\end{align*}
From~\eqref{HL-eq-f-def} and~\eqref{HL-eq-a-def}, we see that $a_{h,\eta}^{-1/(1-\eta)} =f_{h,\eta},$
and therefore
\begin{align*}
1/\overline{\alpha}(h,\varepsilon,\eta) 
&= \xi_{d,1}\ (1-D^2)^{-1/(2-2\eta)} \left( \prod_{k=2}^h a_{k,\eta}^{-1/(1-\eta)} \right) \varepsilon^{-1/(1-\eta)}
\\
&= \xi_{d,1}\ (1-D^2)^{-1/(2-2\eta)} \left( \prod_{k=2}^h f_{k,\eta} \right) \varepsilon^{-1/(1-\eta)}.
\end{align*}
Therefore
\begin{align*}
\overline{g}(h,\varepsilon,\eta) 
&= 
\left\lfloor
 \frac{ \log\left( 
  \frac{\displaystyle C \gamma_{d,h}^{1/2}}{\displaystyle (1-D^2)^{1/(2-2\eta)}}\ 
  \frac{\displaystyle \xi_{d,1}}{\displaystyle \xi_{d,h}}
  \left( 
   \prod_{k=2}^h f_{k,\eta} 
  \right) 
  \left(\frac{\displaystyle 1}{\displaystyle \varepsilon}\right)^{1/(1-\eta)}
 \right) }{\log \big(D^{-1}\big)} 
\right\rfloor_{+}
\\
&= g(h,\varepsilon,\eta) \quad \text{as per~\eqref{HL-eq-g-def}.}
\end{align*}

From the inductive assumption and the sum~\eqref{HL-eq-ov-c-h-1}, it follows that
\begin{align*}
\overline{c}(h,\varepsilon,\eta)
&\leqslant
C_{h-1} (a_{h,\eta}\,\varepsilon)^{-\rho/(1-\eta)} 
\left(
1
+
\sum_{\ell=1}^{\overline{g}(h,\varepsilon,\eta)} 
\nu_{\ell}\ \left(\frac{C\sqrt{\gamma_{d,h}}\,D^{\ell}}{\xi_{d,h}}\right)^{\rho}
\right)
\\
&=
C_{h-1}(a_{h,\eta}\,\varepsilon)^{-\rho/(1-\eta)} 
\left(
 1 +
 \frac{C^{\rho} \gamma_{d,h}^{\rho/2}}{\xi_{d,h}^{\rho}}
 \sum_{\ell=1}^{g(h,\varepsilon,\eta)} 
 \nu_{\ell}\ D^{\ell\rho}
\right).
\end{align*}
%
Therefore the bound~\eqref{HL-eq-proof-11} implies the bound
\begin{align*}
\overline{c}(h,\varepsilon,\eta)
&\leqslant
C_{h-1}(a_{h,\eta}\,\varepsilon)^{-\rho/(1-\eta)} 
\left(
 1 +
 \frac{C^{\rho} \gamma_{d,h}^{\rho/2}}{\xi_{d,h}^{\rho}}\,g(h,\varepsilon,\eta)
\right)
=
C_h\ \left( \frac{1}{\varepsilon} \right)^{\rho/(1-\eta)},
\intertext{where (using~\eqref{HL-eq-C-1-def})}
C_h
&:=
C_{h-1}\ a_{h,\eta}^{-\rho/(1-\eta)} 
\left(
 1 +
 \frac{C^{\rho} \gamma_{d,h}^{\rho/2}}{\xi_{d,h}^{\rho}}\,g(h,\varepsilon,\eta)
\right)
\\
&=
C_{h-1}
\left(
 1 +
 \frac{C^{\rho} \gamma_{d,h}^{\rho/2}}{\xi_{d,h}^{\rho}}\,g(h,\varepsilon,\eta)
\right)
f_{h,\eta}^{\rho}.
\end{align*}

By induction, we therefore have
\begin{align*}
C_d
&=
C_1
\prod_{k=2}^d
\left(
 1 +
 \frac{C^{\rho} \gamma_{d,k}^{\rho/2}}{\xi_{d,k}^{\rho}}\, g(k,\varepsilon,\eta)
\right)
f_{k,\eta}^{\rho}
\\
&=
\frac{\operatorname{max}(\sqrt{\gamma_{d,1}}, \xi_{d,1})^{\rho}
{\displaystyle \prod_{k=2}^d}
\left(
 1 +
 \frac{\displaystyle C^{\rho} \gamma_{d,k}^{\rho/2}}{\displaystyle \xi_{d,k}^{\rho}}\ g(k,\varepsilon,\eta)
\right)
f_{k,\eta}^{\rho}
}{(1 - D^{\rho})(1-D^2)^{\rho/(2-2\eta)}}
\\
&=
C(d,\varepsilon,\eta), \quad \text{as per~\eqref{HL-eq-C-d-def}.} &\qed
\end{align*}
\end{appendices}

\section*{Acknowledgements}
Thanks to Rob Womersley for the spherical designs,
and to Gary Froyland for discussions on precedence-constrained knapsack problems.
The hospitality of the Hausdorff Research Institute for Mathematics (HIM) 
in Bonn is much appreciated.
The support of the Australian Research Council under its Centre of Excellence program 
is gratefully acknowledged.



\end{document}